\newcommand{\dbar}{\ensuremath{\overline\partial}}
\newcommand{\C}{\ensuremath{\mathbb{C}}}
\newcommand{\sumprime}{\if@display\sideset{}{'}\sum%
            \else\sum'\fi}
\begin{document}

\numberwithin{equation}{section}

% define theorem environments
\newtheorem{theorem}{Theorem}[section]
\newtheorem{proposition}[theorem]{Proposition}
\newtheorem{conjecture}[theorem]{Conjecture}
\def\theconjecture{\unskip}
\newtheorem{corollary}[theorem]{Corollary}
\newtheorem{lemma}[theorem]{Lemma}
\newtheorem{observation}[theorem]{Observation}
\newtheorem{definition}{Definition}
\numberwithin{definition}{section} %\def\thedefinition{\unskip}
\newtheorem{remark}{Remark}
\def\theremark{\unskip}
\newtheorem{question}{Question}
\def\thequestion{\unskip}
\newtheorem{example}{Example}
\def\theexample{\unskip}
\newtheorem{problem}{Problem}

%\thanks{Research supported by the Key Program of NSFC No. 11171255.}

% \address{Department of Mathematics, Tongji University, Shanghai, 200092, China}
 
% \email{99jujiewu@tongji.edu.cn}
 
 \title[Poincar\'e series and very ampleness]{Poincar\'e series and very ampleness criterion for pluri-canonical bundles}
%\author{Jujie Wu}

\author[Jujie Wu ]{Jujie Wu}
\address{
%School of Mathematics and statistics, Henan University, Kaifeng 475001, Henan Province, People's Republic of China -and- 
	Department of Mathematical Sciences, NTNU, Sentralbygg 2, Alfred Getz vei 1, 7034 Trondheim, Norway}
%\email{jujie.wu@ntnu.no}
\email{99jujiewu@tongji.edu.cn}
\thanks{The author was supported by the Norwegian Research Council grant 240569 and NSFC grant 11601120 .}

\date{}
\maketitle

\bigskip

\begin{abstract}
Let $X$ be a compact quotient of a bounded domain in $\mathbb C^n$. Let $K_X$ be the canonical line bundle of $X$. In this paper, we shall introduce the notion of $S$ very ampleness for the pluri-canonical line bundles $mK_X$ by using the Poincar\'e series. The main result is an effective Seshadri constant criterion of $S$ very ampleness for $mK_X$. An elementary proof of surjectivity of the Poincar\'e map is also given.

\bigskip

\noindent{{\sc Mathematics Subject Classification} (2010): 32Q40, 32N05, 32J25.}

\smallskip

\noindent{{\sc Keywords}:, Poincar\'{e} series, $\dbar$-equation, very ampleness, pluri-canonical line bundle, Bergman kernel, Seshadri constant.}
\end{abstract}

\tableofcontents

\section{Introduction}

Let $X=\Omega/\Gamma$ be a compact quotient of a bounded domain $\Omega\subset\C^n$. Let $K_X$ be the canonical line bundle of $X$. Denote by $S$ the space of holomorphic sections of $mK_X$ generated by the Poincar\'e series of bounded holomorphic functions on $\Omega$. Let $\sigma_0, \cdots, \sigma_N$ be a basis of $S$. Our motivation comes from the following result of Siegel (see section 40 in \cite{Siegel49}): if $m$ is sufficiently large then there exists $x\in X$ such that
\begin{equation}\label{eq:motivation}
    z \to [\sigma_0(z), \cdots, \sigma_N(z)]
\end{equation}
defines a non-degenerated holomorphic mapping from a neighborhood of $x$ to $\mathbb P^N$. Inspired by Siegel's result, we call $mK_X$ \emph{$S$ very ample} if \eqref{eq:motivation} gives an embedding of $X$ into $\mathbb P^N$. This paper is an attempt to study $S$ very ampleness by using H\"ormander's $L^2$ estimates for the $\dbar$-equation \cite{Hormander65}. Notice that
\begin{equation}\label{eq:equivalent}
    \text{S very ample}\ \Leftrightarrow \ \text{very ample}
\end{equation}
if the Poincar\'e map is surjective, i.e. $S=H^0(X, mK_X)$. By Earle-Resnikoff's theorem (see \cite{Earle69, Resnikoff69}), if $\Omega$ is homogeneous then $H^0(X,mK_X)$ is generated by the Poincar\'e series of some weighted $L^1$ holomorphic functions on $\Omega$ (see \cite{Bers65,Ahlfors64,Bell66} and chapter 7 of Koll\'ar's book \cite{Kollar95} for the background). The proof of Earle-Resnikoff's theorem relies heavily on studies of the bounded symmetric domains (see \cite{Resnikoff69}). Our first main result an elementary proof of Earle-Resnikoff's theorem. In section 3, we shall prove that if $\Omega$ is homogeneous then $H^0(X,mK_X)$ is generated by the Poincar\'e series of polynomials on $\C^n$, in particular, $S=H^0(X, mK_X)$.

By Demailly's result \cite{Demailly96}, lower bound of the Seshadri constant of $K_X$ can be used to study very ampleness of $mK_X$. Thus the effective $S$ very ampleness problem can be solved if $\Omega$ is assumed to be homogeneous. But in general, there exist compact complex surfaces, namely, the Kodaira surfaces $M_{n,m}$, whose universal coverings are bounded non-homogeneous domains in $\C^2$ (see \cite{Kodaira67,Atiyah69,Shabat83}). Thus it is necessary to find a method without using surjectivity of the Poincar\'e map.

Our second main result is in section 4. We shall prove that the Seshadri constant can also be used to give a criterion of $S$ very ampleness \emph{without assuming surjectivity of the Poincar\'e map}. Lower bound estimate of the Seshadri constant of $K_X$ (see \cite{HT00,HT99} for earlier results) is given in the last section. We shall show how to use Berndtsson-Cerd$\grave{a}$'s cut-off function (see \cite{BC95}) to estimate the lower bound of the Seshadri constant.

The main research subject in this paper is compact quotient. In 1971, Griffiths \cite{Griffiths71} showed that for every algebraic variety $V$, there exists a Zariski open subset $U$ of $V$ such that the universal covering of $U$ is a bounded domain of holomorphy in $\C^n$ ($n=\dim V$). Thus it would also be interesting to study quasi-projective quotient. We leave it to the interested reader.

%Finally, we would like to thank Bo-Yong Chen for introducing us this topic and Min Ru for his suggestion to use the Seshadri constant. Part of this work was done while the first author was visiting the Math. Department at Univ. of Houston. She wants to express her heartfelt gratitude to the institution for its invitation.

\section{Basic definitions and results}

Let $\Omega$ be a bounded domain in $\mathbb C^n$. The set ${\rm Iso}(\Omega)$ of biholomorphic mappings from $\Omega$ onto itself has a natural group structure. Let $\Gamma$ be a subgroup of ${\rm Iso}(\Omega)$. Assume that $\Gamma$ is discontinuous, i.e., for every $a\in\Omega$, $\{\gamma(a): \gamma\in \Gamma\}$ is a discrete sequence in $\Omega$. Since $\Omega$ is bounded, by using Cauchy's integral formula, it is easy to see that for every compact subset $K$ in $\Omega$, the number of $\gamma\in \Gamma$ such that $\gamma(K)\cap K=\emptyset$ is finite (see section 36 in \cite{Siegel49}). Thus $\Gamma$ is a countable set. Denote by $j_\gamma$ the complex Jacobian of $\gamma$. Let $m\geq 2$ be an integer. By definition, a (holomorphic) $\Gamma$-automorphic form of weight $m$ on $\Omega$ is a holomorphic function $f: \Omega \rightarrow \C$ such that 
\begin{eqnarray}
f(z) = f(\gamma z ) j^{m}_\gamma(z), \ \ \ \forall \gamma \in \Gamma, z \in \Omega.
\end{eqnarray}	 
The following lemma is essentially known (see section 37 in \cite{Siegel49}) :
\begin{lemma}\label{le:poincare} [Poincar\'e]
Let $\Omega$ be a bounded domain in $\mathbb C^n$. Let $\Gamma$ be a discontinuous subgroup of ${\rm Iso}(\Omega)$. Then
\begin{equation*}
  \sum_{\gamma\in\Gamma} |j_\gamma(z)|^2
\end{equation*}
converges to a smooth function on $\Omega$ locally uniformly.
\end{lemma}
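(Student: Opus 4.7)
The plan is to apply the sub-mean value property to each plurisubharmonic function $|j_\gamma|^2$ and convert the resulting integrals into Euclidean volumes of the images $\gamma(B)$ via the change-of-variables formula; the uniform bound then follows from the finite volume of $\Omega$ combined with a bounded-multiplicity argument.

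In more detail, fix $a\in\Omega$ and choose $r>0$ with $\overline{B(a,2r)}\subset\Omega$. Since $j_\gamma$ is holomorphic, $|j_\gamma|^2$ is plurisubharmonic, so the sub-mean value inequality applied on balls of radius $r$ centered at points $z\in B(a,r)$ bounds $|j_\gamma(z)|^2$ by a constant (depending only on $n$ and $r$) times $\int_{B(a,2r)}|j_\gamma|^2\,dV$. The crucial identity is that $|j_\gamma|^2$ equals the real Jacobian determinant of $\gamma$ viewed as a smooth map $\mathbb R^{2n}\to\mathbb R^{2n}$, so the change of variables formula rewrites this integral as $\mathrm{Vol}(\gamma(B(a,2r)))$.

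Summing over $\gamma\in\Gamma$ and interchanging sum and integral, the total mass becomes $\int_\Omega N(w)\,dV(w)$, where $N(w):=\#\{\gamma:w\in\gamma(B(a,2r))\}$. The main obstacle is to bound $N(w)$ uniformly in $w$. For this I would use the elementary observation that whenever $w\in\gamma_1(\overline{B(a,2r)})\cap\gamma_2(\overline{B(a,2r)})$, the element $\delta:=\gamma_1^{-1}\gamma_2$ sends some point of $\overline{B(a,2r)}$ back into $\overline{B(a,2r)}$; hence $N(w)$ is bounded above by $N_0:=\#\{\gamma:\gamma(\overline{B(a,2r)})\cap\overline{B(a,2r)}\neq\emptyset\}$, which is finite by the discontinuity property recalled just before the lemma. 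Combined with $\mathrm{Vol}(\Omega)<\infty$, this yields a uniform bound on the partial sums $\sum_\gamma|j_\gamma(z)|^2$ over $B(a,r)$, and since these partial sums form an increasing sequence of plurisubharmonic functions with uniformly controlled $L^1$ norm, they converge locally uniformly on $\Omega$.

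Finally, to upgrade locally uniform convergence to smoothness, I would use Cauchy estimates to bound any mixed derivative $\partial_z^\alpha\partial_{\bar z}^\beta(|j_\gamma|^2)$ on $B(a,r)$ by a constant multiple of $\sup_{B(a,3r/2)}|j_\gamma|^2$, which the sub-mean value property in turn controls by $\int_{B(a,2r)}|j_\gamma|^2\,dV$. The same multiplicity argument then shows that every term-by-term differentiated series converges absolutely and uniformly on compact subsets of $\Omega$, so the limit function is $C^\infty$.
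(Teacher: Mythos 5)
Your proposal is correct and complete; it is essentially the classical argument of Siegel (section 37 of the cited notes) that the paper itself invokes without reproducing: sub-mean value inequality for the plurisubharmonic functions $|j_\gamma|^2$, the identification of $|j_\gamma|^2$ with the real Jacobian so that the integrals become $\mathrm{Vol}(\gamma(B))$, and the bounded-multiplicity bound $N(w)\le N_0$ coming from discontinuity (note the paper's phrase ``$\gamma(K)\cap K=\emptyset$ is finite'' is a typo for $\gamma(K)\cap K\neq\emptyset$, and you use the intended statement correctly). Your tail estimate $\sup_{B(a,r)}|j_\gamma|^2\le C\int_{B(a,2r)}|j_\gamma|^2\,dV$ together with summability of the right-hand sides already gives the uniform Cauchy criterion for the series, and the Cauchy-estimate bootstrap for term-by-term derivatives of $j_\gamma\overline{j_\gamma}$ settles smoothness, so no gaps remain.
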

Lemma \ref{le:poincare} implies that if $f$ is a bounded holomorphic function then
\begin{equation}\label{eq:poincare-s}
P_m(f)(z):=\sum_{\gamma\in\Gamma} f(\gamma(z))j^m_\gamma(z)
\end{equation}
is a holomorphic $\Gamma$-automorphic form of weight $m$ on bounded domain $\Omega$. We call $P_m(f)$ the Poinc\'are series of $f$. Denote by $\mathcal{A}(\Gamma,m)$ the space of holomorphic $\Gamma$-automorphic form of weight $m$ on $\Omega$.
 %For each $\gamma_1 \in \Gamma$,
%\begin{eqnarray}
%	P_mf(\gamma_1 z) &=& \sum_\gamma 	P_mf(\gamma \gamma_1 z) j_\gamma^m(\gamma_1 z)\\ \nonumber
%	& =& \sum_\gamma 	P_mf(\gamma \gamma_1 z) j_{\gamma \gamma_1}^m( z) j_{\gamma_1}^{-m}( z)\\ \nonumber
%	& = & j_{\gamma_1}^{-m}( z) P_mf( z)
%\end{eqnarray}

%\begin{definition}
%	A function $f: \Omega \rightarrow \C$ is called a (holomorphic) $\Gamma$-automorphic form of weight $m$ on $\Omega$ if $f$ is holomorphic and 
%	\begin{eqnarray}
%	f(z) = f(\gamma z ) j^{m}_\gamma(z), \ \ \ \forall \gamma \in \Gamma, z \in \Omega.
%	\end{eqnarray}	
%\end{definition}

%\begin{lemma}
%	Let $\Omega$ be a bounded domain in $\mathbb C^n$. Let $\Gamma$ be a discontinuous subgroup of ${\rm Iso}(\Omega)$. $m\geq 2$ be an integer. If $f$ is a bounded holomorphic function on $\Omega$, then
%\end{lemma}
%If $\Omega$ is a bounded domain and $\Gamma$ is a discontinuous subgroup of ${\rm Iso}(\Omega)$. $\Gamma$ is a 
We say that $\Gamma$ is fixed point free if for each $z\in \Omega$, $z= \gamma z $ implies $\gamma =1$. 
%any $\gamma\in\Gamma$ except the identity has no fixed point. 
If $\Gamma$ is discontinuous and fixed point free then $X:=\Omega/\Gamma$ has a canonical structure of a complex manifold induced from that of $\Omega$. We have $\mathcal{A}(\Gamma, m) \simeq H^0(X, mK_X)$, and $\pi^*\sigma = P_m(f)(z) (dz_1\wedge \cdots dz_n)^{\otimes}$, where $K_X$ denotes the canonical line bundle of $X$, $\sigma \in H^0(X, mK_X)$, $P_m(f) \in \mathcal{A}(\Gamma, m) $, and $\pi: \Omega \rightarrow X$ is the universal covering map.
Thus $P_m(f)(dz_1\wedge \cdots dz_n)^{\otimes}$ (\textbf{also denoted by $P_m(f)$} in order for convenience) can be seen as a holomorphic section of $mK_X$ over $X$,
   Siegel (see section 40 in \cite{Siegel49}) proved that if $m$ is large enough then there exist bounded holomorphic functions $f_0, \, \cdots, \ f_n$ and a point $z_0$ in $\Omega$ such that $P_m(f_0)(z_0)\neq 0$ and
\begin{equation}\label{eq:m-analytic-ind}
  \det \left(\frac{\partial(P_m(f_j)/P_m(f_0))}{\partial z_k}\right)_{1\leq j,k\leq n}(z_0)\neq0.
\end{equation}
Notice that \eqref{eq:m-analytic-ind} means that
\begin{equation*}
    z\mapsto  [P_m(f_0)(z), \cdots, P_m(f_n)(z)]\in \mathbb P^n
\end{equation*}
is non-degenerated at $z_0$. Assume further that $X$ is compact without boundary. We know that
\begin{equation*}
    S:=\{P_m(f): f \ \text{is a bounded holomorphic function on} \ \Omega \}
\end{equation*}
is a finite-dimensional subspace of $H^0(X, mK_X)$. Let $\sigma_0, \cdots, \sigma_N$ be a basis of $S$. We say that $mK_X$ is $S$ very ample if $z\mapsto [\sigma_0(z), \cdots, \sigma_N(z)]$ defines an embedding of $X$ into $\mathbb P^N$. The starting point of this paper is to solve the following two problems:

\begin{problem}[Surjectivity of the Poincar\'e map]\label{pr:start1} $S=H^0(X, mK_X)$ ?
\end{problem}

\begin{problem}\label{pr:start2} Can we find an effective bound on $m$ such that $mK_X$ is $S$ very ample ?
\end{problem}

Let's study the first problem first. In section 3, we shall give a simple proof of the known result $S' = H^0(X, mK_X)$ for compact quotients of a bounded symmetric domain, where $S' \supseteq S$ is a set of Poincar\'e series. Thus it suffices to show that $S'=S$, which follows from a standard approximation technique. We suggest the reader to read Chapter 7 of Koll\'ar's book \cite{Kollar95} for the background of the first problem (for earlier results, see \cite{Bers65,Ahlfors64,Bell66,Earle69}).

Let's define $S'$. Let $K_\Omega(z,z)$ be the Bergman kernel of $\Omega$. Put
\begin{equation*}
    ||f||_{p,l}=\int_{\Omega} |f|^p K_\Omega^{-l}.
\end{equation*}
Then
\begin{equation*}
    S':=\{P_m(f): f \ \text{is a holomorphic function on} \ \Omega\ \text{such that} \ \|f\|_{1, (m-2)/2}<\infty  \}
\end{equation*}
By the following lemma (see Proposition 6 in \cite{Bers75}), $S'$ is well defined.

\begin{lemma}\label{le:l1} Let $\Omega$ be a bounded domain in $\mathbb C^n$. Let $\Gamma$ be a subgroup of ${\rm Iso}(\Omega)$. Assume that $\Gamma$ is discontinuous and fixed point free. Let $m\geq 2$ be an integer. Let $f$ be a  holomorphic function on $\Omega$ such that $\|f\|_{1, (m-2)/2}<\infty$. Then $P_m(f)$ converges to a holomorphic function on $\Omega$ such that
\begin{equation*}
  \int_X \|P_m(f)\|K_\Omega^{(2-m)/2} \leq \|f\|_{1, (m-2)/2},
\end{equation*}
where $\int_X$ means integration on a fundamental domain of $X$ and
\begin{equation*}
    \|P_m(f)\|(z):= \sum_{\gamma\in\Gamma} |f(\gamma(z))j^m_\gamma(z)|.
\end{equation*}
\end{lemma}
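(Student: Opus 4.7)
The plan is to evaluate the candidate $\Gamma$-invariant density $\|P_m(f)\|\cdot K_\Omega^{(2-m)/2}\,d\lambda$ on a fundamental domain by a termwise change of variables, and then to upgrade the resulting finite integral to local uniform convergence of the series $P_m(f)$ via the standard $L^1\!\to\! L^\infty$ estimate for holomorphic functions.

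First I would record the transformation law
\[
K_\Omega(z,z)=|j_\gamma(z)|^2\,K_\Omega(\gamma(z),\gamma(z)),\qquad \gamma\in\mathrm{Iso}(\Omega),
\]
which comes from the reproducing property of $K_\Omega$ together with the fact that $f\mapsto (f\circ\gamma)\,j_\gamma$ is an isometry of the Bergman space. Combined with the chain rule for $j_\gamma$, this makes $\|P_m(f)\|(z)\,K_\Omega(z,z)^{(2-m)/2}\,d\lambda(z)$ $\Gamma$-invariant, so $\int_X$ of it is really $\int_D$ for any measurable fundamental domain $D$.

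Next, by Tonelli (all integrands are nonnegative) and the termwise rewriting
\[
|j_\gamma(z)|^m\,K_\Omega(z,z)^{(2-m)/2}=|j_\gamma(z)|^2\,K_\Omega(\gamma(z),\gamma(z))^{(2-m)/2},
\]
the substitution $w=\gamma(z)$, under which $|j_\gamma(z)|^2\,d\lambda(z)=d\lambda(w)$, yields
\[
\int_D\|P_m(f)\|\,K_\Omega^{(2-m)/2}\,d\lambda=\sum_{\gamma\in\Gamma}\int_{\gamma(D)}|f(w)|\,K_\Omega(w,w)^{(2-m)/2}\,d\lambda(w)=\|f\|_{1,(m-2)/2},
\]
the last step because $\{\gamma(D)\}_{\gamma\in\Gamma}$ tiles $\Omega$ modulo a null set (fixed-point-free discontinuous action). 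This is the desired bound (in fact an equality).

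To promote a.e.\ finiteness of $\|P_m(f)\|$ to local uniform convergence (hence to holomorphy of $P_m(f)$), I would fix $z_0\in\Omega$ and choose $r>0$ so small that $\{\gamma(B_r(z_0))\}_{\gamma\in\Gamma}$ are pairwise disjoint in $\Omega$; this is possible by proper discontinuity and fixed-point freeness. Each $h_\gamma:=f(\gamma(\cdot))\,j_\gamma^m(\cdot)$ is holomorphic, so $\sup_{B_{r/2}(z_0)}|h_\gamma|\leq C_r\int_{B_r(z_0)}|h_\gamma|\,d\lambda$ by the submean-value bound. Summing over $\gamma$, repeating the change of variables above, and using that $K_\Omega$ is continuous and bounded below on $\overline{B_r(z_0)}$ gives
\[
\sum_{\gamma\in\Gamma}\sup_{B_{r/2}(z_0)}|h_\gamma|\leq C'_r\,\|f\|_{1,(m-2)/2}<\infty,
\]
so the Poincar\'e series converges locally uniformly and $P_m(f)$ is holomorphic on $\Omega$. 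The main obstacle is not any deep idea but careful exponent bookkeeping: arranging the powers of $|j_\gamma|$ and $K_\Omega$ so that the real Jacobian $|j_\gamma|^2$ of $w=\gamma(z)$ precisely absorbs the factor $|j_\gamma|^{m-2}$, and picking $r$ small enough that $\sum_\gamma\int_{\gamma(B)}\leq\int_\Omega$ cleanly.
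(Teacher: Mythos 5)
Your proof is correct and is exactly the argument the paper has in mind: the paper gives no details for this lemma beyond citing Bers and remarking that it ``follows easily from the sub-mean value inequality,'' and your unfolding computation (termwise change of variables $w=\gamma(z)$ absorbing $|j_\gamma|^{m-2}$ into the Bergman-kernel transformation law and the real Jacobian) together with the sub-mean value bound on disjoint translated balls is the standard way to carry that out. The exponent bookkeeping and the disjointness of $\{\gamma(B_r(z_0))\}$ both check out, so there is nothing to add.
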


In \cite{Earle69,Resnikoff69}, the following proposition is proved:

\begin{proposition}\label{pr:l1} Let $\Omega$ be a bounded symmetric domain in $\mathbb C^n$. Let $X:=\Omega/\Gamma$ be a compact quotient of $\Omega$. If $m\geq 2$ is an integer then $H^0(X, mK_X)=S'$.
\end{proposition}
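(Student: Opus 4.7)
The plan is to exhibit, for each $\sigma \in H^0(X, mK_X)$, a holomorphic $f$ on $\Omega$ with $\|f\|_{1,(m-2)/2} < \infty$ such that $P_m(f) = \sigma$; the reverse inclusion is already provided by Lemma~\ref{le:l1}. Lift $\sigma$ to a holomorphic function $\tilde\sigma$ on $\Omega$ satisfying $\tilde\sigma(\gamma z)\,j_\gamma(z)^m = \tilde\sigma(z)$ for every $\gamma \in \Gamma$. Since $|\tilde\sigma|^2 K_\Omega^{-m}$ is $\Gamma$-invariant and descends to a continuous function on the compact $X$, it is bounded, giving the pointwise majoration $|\tilde\sigma(z)| \leq C K_\Omega(z,z)^{m/2}$ on $\Omega$.

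Fix a fundamental domain $F$ for $\Gamma$ and set
\begin{equation*}
f(z) := c_m \int_F \tilde\sigma(w) K_\Omega(z,w)^m K_\Omega(w,w)^{1-m} dV(w),
\end{equation*}
where $c_m$ is the normalization for which $c_m K_\Omega(z,w)^m$ is the reproducing kernel of $A^2(\Omega, K_\Omega^{1-m} dV)$. Since $K_\Omega(z,w)^m$ is holomorphic in $z$, $f$ is holomorphic. A direct change-of-variables computation, combining the transformation laws $K_\Omega(\gamma w,\gamma w)|j_\gamma(w)|^2 = K_\Omega(w,w)$, $\tilde\sigma(\gamma w) = \tilde\sigma(w)/j_\gamma(w)^m$, and the corresponding rule for $K_\Omega(\gamma z, w)$, collapses the Poincar\'e series $\sum_\gamma j_\gamma(z)^m f(\gamma z)$ to a single integral over $\Omega$:
\begin{equation*}
P_m(f)(z) = c_m \int_\Omega \tilde\sigma(w) K_\Omega(z,w)^m K_\Omega(w,w)^{1-m} dV(w).
\end{equation*}

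Now invoke the bounded-symmetric hypothesis. On such $\Omega$, the weighted Bergman reproducing formula
\begin{equation*}
h(z) = c_m \int_\Omega h(w) K_\Omega(z,w)^m K_\Omega(w,w)^{1-m} dV(w)
\end{equation*}
extends from $A^2(\Omega, K_\Omega^{1-m} dV)$ to every holomorphic $h$ satisfying $|h| \leq C K_\Omega^{m/2}$; this extension is a standard consequence of the explicit product form of $K_\Omega$ on symmetric domains together with Forelli--Rudin integral estimates. Applied to $\tilde\sigma$, it gives $P_m(f) = \tilde\sigma$, so $\sigma = P_m(f)$.

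Finally we verify $f \in A^1_{(m-2)/2}$. By Fubini and the Forelli--Rudin bound
\begin{equation*}
\int_\Omega |K_\Omega(z,w)|^m K_\Omega(z,z)^{-(m-2)/2} dV(z) \leq C K_\Omega(w,w)^{m/2},
\end{equation*}
one obtains $\|f\|_{1,(m-2)/2} \leq C' \int_F |\tilde\sigma(w)| K_\Omega(w,w)^{1-m/2} dV(w) \leq C'' \int_F K_\Omega\, dV < \infty$, using the pointwise majoration on $\tilde\sigma$ and the fact that $K_\Omega\, dV$ descends to the finite Bergman volume of $X$. The substantive difficulty lies in the extended reproducing formula, which genuinely requires the symmetric structure through the explicit form $K_\Omega(z,w) \propto h(z,\bar w)^{-p}$; the folding identity for $P_m(f)$ and the $L^1$-bound above are bookkeeping consequences of the transformation rules and standard integral estimates.
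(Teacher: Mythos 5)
Your construction is the same one the paper uses: restrict the lifted automorphic form to a fundamental domain $F$, apply the weighted Bergman projection associated with the norm $\|\cdot\|_{2,m-1}$ to produce $f$, unfold the Poincar\'e series of $f$ into a single integral over $\Omega$, and conclude by a reproducing identity; the two things to check ($\|f\|_{1,(m-2)/2}<\infty$ and $P_m(f)=\tilde\sigma$) are exactly the paper's Lemma \ref{le:fj} and Lemma \ref{le:pmfj}. The difference lies entirely in how the two analytic ingredients are certified. You identify the kernel explicitly as $c_mK_\Omega(z,w)^m$ and invoke Forelli--Rudin estimates both for the $L^1$ bound and for extending the reproducing formula to holomorphic $h$ with $|h|\leq CK_\Omega^{m/2}$ (which indeed need not lie in the weighted $L^2$ space). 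The paper instead works with the abstract kernel $K_m(z,w)$, derives its transformation law from invariance of the norm under ${\rm Iso}(\Omega)$, obtains the $L^1$ bound from homogeneity alone (the integral $A(w)$ is ${\rm Iso}(\Omega)$-invariant, hence constant --- although the paper never verifies that this constant is finite, a point your Forelli--Rudin bound actually supplies), and proves the extended reproducing identity by the dilation trick on the circular Kor\'anyi--Wolf realization: $h(t\cdot)\in A^2$ for $t<1$, then $t\to1$ using the monotonicity \eqref{eq:K(t)} and dominated convergence. The paper also closes by pairing $P_m(f_j)$ against an orthonormal basis of $H^0(X,mK_X)$ rather than by your direct folding of the series, but that is only bookkeeping. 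Your argument is correct provided the cited Forelli--Rudin facts for bounded symmetric domains are accepted; if you want it self-contained, the paper's dilation argument replaces both citations and needs only that $\Omega$ is circular and star-shaped, while your route has the mild advantage of making the finiteness of the key integral explicit.
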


We shall prove that:

\begin{theorem}\label{th:l1} Let $\Omega$ be a bounded symmetric domain in $\mathbb C^n$ with $0\in \Omega$. Let $X:=\Omega/\Gamma$ be a compact quotient of $\Omega$. If $m\geq 2$ is an integer then
\begin{equation*}
    H^0(X, mK_X)= \{P_m(f): f \ \text{is a polynomial on} \ \mathbb C^n  \}.
\end{equation*}
In particular, $H^0(X, mK_X)=S$.
\end{theorem}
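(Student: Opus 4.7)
The plan is to combine Proposition \ref{pr:l1} with a polynomial approximation argument, exploiting the finite-dimensionality of $H^0(X,mK_X)$. Set
\[
V := \{P_m(p): p \text{ a polynomial on } \mathbb{C}^n\}\subset H^0(X,mK_X).
\]
As a linear subspace of a finite-dimensional space, $V$ is automatically closed in every linear topology on $H^0(X,mK_X)$. By Proposition \ref{pr:l1} each $\sigma\in H^0(X,mK_X)$ equals $P_m(f)$ for some holomorphic $f$ on $\Omega$ with $\|f\|_{1,(m-2)/2}<\infty$, and Lemma \ref{le:l1} makes $f\mapsto P_m(f)$ continuous from $\|\cdot\|_{1,(m-2)/2}$ into the $L^1$ topology on $H^0(X,mK_X)$ (note that $\|P_m(f)\|\,K_\Omega^{(2-m)/2}\,dV$ is $\Gamma$-invariant by the transformation laws of $K_\Omega$ and $j_\gamma$). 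It therefore suffices to approximate such an $f$ by polynomials in the norm $\|\cdot\|_{1,(m-2)/2}$; this will force $\sigma\in\overline V=V$.

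I would realize $\Omega$ in its Harish-Chandra model, where it is a bounded, convex, circular domain containing the origin (in particular balanced). For $r\in(0,1)$ the dilation $f_r(z):=f(rz)$ is then holomorphic on $(1/r)\Omega\supset\overline\Omega$, so its Taylor series at $0$ converges uniformly to $f_r$ on $\overline\Omega$. Since $\Omega$ is bounded and $K_\Omega^{-(m-2)/2}$ is bounded above on $\Omega$ (using $K_\Omega(z,z)\geq 1/\mathrm{vol}(\Omega)$), these Taylor polynomials approximate $f_r$ in $\|\cdot\|_{1,(m-2)/2}$; consequently $P_m(f_r)\in V$ for every $r\in(0,1)$, and the theorem reduces to showing $\|f_r-f\|_{1,(m-2)/2}\to 0$ as $r\to 1^{-}$.

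For this last step I would split the integral over a large compact $K\subset\Omega$ and its complement. On $K$ the uniform convergence $f_r\to f$ handles the contribution. The $f$-piece of the complementary tail is small for large $K$ by integrability of $|f|\,K_\Omega^{-(m-2)/2}$; for the $f_r$-piece the change of variables $w=rz$ gives
\[
\int_{\Omega\setminus K}|f(rz)|\,K_\Omega(z,z)^{-(m-2)/2}\,dV(z)=r^{-2n}\!\!\int_{r\Omega\setminus rK}|f(w)|\,K_\Omega(w/r,w/r)^{-(m-2)/2}\,dV(w),
\]
which I would dominate via the Bergman-kernel monotonicity $K_\Omega(rz,rz)\le K_\Omega(z,z)$ for $r\in(0,1)$. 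For balanced $\Omega$ this monotonicity is immediate from the reproducing-kernel expansion $K_\Omega(z,z)=\sum_k|e_k(z)|^2$ in an orthonormal basis of homogeneous polynomials $e_k$, since $|e_k(rz)|^2=r^{2\deg e_k}|e_k(z)|^2\le|e_k(z)|^2$. This bounds the $f_r$-tail by $r^{-2n}\int_{\Omega\setminus rK}|f|\,K_\Omega^{-(m-2)/2}$, which is the $f$-tail plus an integral over the thin shell $K\setminus rK$ that vanishes as $r\to 1$ since $|f|\,K_\Omega^{-(m-2)/2}$ is bounded on $K$. The main obstacle is just the bookkeeping in this tail estimate; the symmetric-domain hypothesis of Theorem \ref{th:l1} enters only through Proposition \ref{pr:l1} and through the balanced structure of the Harish-Chandra model.
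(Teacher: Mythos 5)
Your proposal is correct and follows essentially the same route as the paper: reduce to Proposition \ref{pr:l1}, realize $\Omega$ as a bounded balanced circular domain, approximate $f$ by the partial sums of the homogeneous expansion of the dilates $f_r$ in the $\|\cdot\|_{1,(m-2)/2}$ norm, and close the argument with the continuity of $P_m$ from Lemma \ref{le:l1} together with the finite-dimensionality of $H^0(X,mK_X)$ (a step you make more explicit than the paper does). The only point of divergence is the proof that $\|f_r-f\|_{1,(m-2)/2}\to 0$: you use a compact-exhaustion/tail estimate via the change of variables $w=rz$ and the monotonicity $K_\Omega(rz,rz)\le K_\Omega(z,z)$, whereas the paper integrates over rotations and dominates using the monotonicity in $t$ of the circular means $\int_0^{2\pi}|f(te^{i\theta}z)|\,d\theta$; both are valid, and the kernel monotonicity you invoke is in any case established and used elsewhere in the paper.
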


Assume the first problem is true. To solve the second, it suffices to find an effective very ampleness criterion for $mK_X$. Let $L$ be an ample line bundle over $X$. Let $h$ be a smooth metric on $L$. Demailly \cite{Demailly92} introduced the Seshadri number $\varepsilon(L,x)$ of $L$ to measure the "local positivity" of $L$ at $x\in X$. By Theorem 7.6 in \cite{Demailly96},
\begin{equation*}
    \varepsilon(L,x)=\sup\{t>0: H(t,x)\neq \emptyset\},
\end{equation*}
where $H(t,x)$ is the space of quasi-plurisubharmonic functions $\phi\in C(X \backslash \{x\})$ such that
\begin{equation*}
    \liminf_{z\to x} \frac{\phi(z)}{\log|z-x|^2}=t,
\end{equation*}
and $i\partial\dbar \phi+i\Theta(L,h) \geq 0$ in the sense of current. Put
\begin{equation*}
    \varepsilon(L)=\inf_{x\in X} \varepsilon(L,x).
\end{equation*}
By using H\"ormander's $L^2$ estimates for the $\dbar$-equation (see \cite{Hormander65}), Demailly proved that (see Proposition 7.10 in \cite{Demailly96})
\begin{equation}\label{eq:De-Seshadri}
    \varepsilon(L)> 2n \Rightarrow K_X+L \ \text{is very ample}.
\end{equation}
By using the Bergman kernel, we know that $K_X$ is ample and
\begin{equation}\label{eq:De-Seshadri}
    (m-1)\varepsilon(K_X)> 2n \Rightarrow mK_X \ \text{is very ample}.
\end{equation}
Thus one may use the lower bound of the Seshadri number of $K_X$ (see \cite{HT99,HT00}) to study the second problem. In general, we don't know whether the first problem is true if $\Omega$ is not assumed to be homogeneous. Since there exist compact complex surfaces, namely, the Kodaira surfaces $M_{n,m}$, whose universal coverings are bounded non-homogeneous domains in $\C^2$ (see \cite{Kodaira67,Atiyah69,Shabat83}), it is necessary to find another method to solve the second problem. We shall use H\"ormander's $L^2$ estimates for the $\dbar$-equation (see \cite{Hormander65}) to prove the following theorem (compare with \eqref{eq:De-Seshadri}):

\begin{theorem}[Main theorem]\label{th:very-ample-Seshadri-Poincare} Let $X$ be a compact quotient of a bounded domain $\Omega\subset\mathbb C^n$. Let $m\geq 2$ be an integer. If
\begin{equation*}
    (m-2)\varepsilon(K_X)>2n
\end{equation*}
then $mK_X$ is $S''$ very ample, where
\begin{equation*}
    S''= \{P_m(f): f \ \text{is a holomorphic function on} \ \Omega\ \text{such that} \ \|f\|_{2, (m-2)}<\infty  \}.
\end{equation*}
\end{theorem}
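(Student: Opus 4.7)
My plan is to adapt Demailly's proof of Proposition~7.10 in \cite{Demailly96} (very ampleness from Seshadri constants) to the covering-space setting, so that the separating sections are produced directly as Poincar\'e series of $L^2$ holomorphic functions on $\Omega$. It suffices to show that for any two (possibly coincident) points $x_0,y_0\in X$ and any prescribed value of $mK_X$ at $y_0$ together with any prescribed $1$-jet at $x_0$, one can realize the data by some $P_m(f)\in S''$. Denote by $\pi\colon\Omega\to X$ the projection, fix lifts $\tilde x_0,\tilde y_0\in\Omega$, and exploit the discontinuity and freeness of $\Gamma$ to choose coordinate balls $B_0\ni\tilde x_0$, $B_1\ni\tilde y_0$ so small that $\gamma B_i\cap B_j=\emptyset$ for every $\gamma\in\Gamma\setminus\{\mathrm{id}\}$ and $i,j\in\{0,1\}$. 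Let $v$ be a holomorphic function on $B_0\cup B_1$ realizing the prescribed data at $\tilde x_0,\tilde y_0$, and let $\chi$ be a smooth cut-off with $\mathrm{supp}\,\chi\subset B_0\cup B_1$ and $\chi\equiv 1$ on smaller balls about $\tilde x_0,\tilde y_0$.

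Let $h$ denote the Bergman metric on $K_X$, for which $\pi^* i\Theta(K_X,h)=i\partial\dbar\log K_\Omega$ on $\Omega$. Using $(m-2)\varepsilon(K_X)>2n$, pick $t$ with $(m-2)t>2n$ and construct (e.g.\ as $\max(\phi_{x_0},\phi_{y_0})-C$ with $\phi_{x_0}\in H(t,x_0)$, $\phi_{y_0}\in H(t,y_0)$ and $C$ large enough to force nonpositivity) a quasi-psh $\phi$ on $X$, continuous off $\{x_0,y_0\}$, nonpositive, with $i\partial\dbar\phi+i\Theta(K_X,h)\ge 0$ and $\phi(z)\sim t\log|z-p|^2$ near each $p\in\{x_0,y_0\}$. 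Setting
\[
\Phi:=(m-2)\bigl(\pi^*\phi+\log K_\Omega\bigr),
\]
the curvature inequality makes $\Phi$ plurisubharmonic on $\Omega$, and the $\Gamma$-invariance of $\pi^*\phi$ gives $\Phi$ a logarithmic pole of weight $(m-2)t>2n$ at every $\gamma\tilde x_0$ and $\gamma\tilde y_0$. H\"ormander's $L^2$ estimate on the (bounded, pseudoconvex) domain $\Omega$ with weight $e^{-\Phi}$, applied to the $\dbar$-closed, compactly supported form $g:=\dbar(\chi v)$, yields $u$ solving $\dbar u=g$ with
\[
\int_\Omega |u|^2 e^{-\Phi}\le C_1\int_\Omega |g|^2 e^{-\Phi}<\infty,
\]
the right-hand side being finite because $\mathrm{supp}\,g$ is compact and disjoint from the pole set of $\Phi$. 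Since $\phi\le 0$ forces $e^{-\Phi}\ge K_\Omega^{-(m-2)}$, the function $f:=\chi v-u$ is holomorphic on $\Omega$ with $\|f\|_{2,m-2}<\infty$, so $P_m(f)\in S''$.

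Finally, the Lelong number $(m-2)t>2n$ of $\Phi$ at each $\gamma\tilde x_0$ and $\gamma\tilde y_0$ exceeds $n+1$, so the standard multiplier-ideal consequence of $L^2$-integrability forces $u$ together with its first-order partials to vanish at every such point. Combined with $\chi v\equiv 0$ outside $B_0\cup B_1$, this shows $f$ and $df$ both vanish at $\gamma\tilde x_0,\gamma\tilde y_0$ for every $\gamma\ne\mathrm{id}$, while at $\tilde x_0,\tilde y_0$ themselves the $0$- and $1$-jets of $f$ coincide with those of $v$. Because $j_{\mathrm{id}}\equiv 1$ (and hence $\partial_i j_{\mathrm{id}}\equiv 0$), termwise evaluation and differentiation of $P_m(f)(z)=\sum_\gamma f(\gamma z)j_\gamma(z)^m$ at $\tilde x_0$ and $\tilde y_0$ collapse to the $\gamma=\mathrm{id}$ contribution; hence $P_m(f)$ realizes the prescribed $1$-jet at $\tilde x_0$ and the prescribed value at $\tilde y_0$, proving $S''$ very ampleness. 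The main obstacle is the interplay between the Seshadri weight $\phi$ (naturally defined on $X$) and the $L^2$ analysis (carried out on $\Omega$): the precise choice $\Phi=(m-2)(\pi^*\phi+\log K_\Omega)$ is what converts the curvature inequality on $X$ into a genuinely psh weight on $\Omega$ whose pole locus is automatically $\Gamma$-invariant, and the bound $(m-2)\varepsilon(K_X)>2n$ is exactly what is needed to push the Lelong number past the multiplier-ideal threshold for simultaneous $1$-jet vanishing at every $\Gamma$-translate.
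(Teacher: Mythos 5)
Your overall strategy---cut-off plus H\"ormander's estimate with the weight $(m-2)(\pi^*\phi+\log K_\Omega)$, Lelong-number vanishing at every $\Gamma$-translate, and collapse of the Poincar\'e series to the $\gamma=\mathrm{id}$ term---is exactly the paper's. However, there is a concrete gap in your treatment of two distinct points. The function $\max(\phi_{x_0},\phi_{y_0})-C$ has \emph{no} logarithmic pole at either point: near $x_0$ the term $\phi_{x_0}$ tends to $-\infty$ while $\phi_{y_0}$ stays bounded, so the maximum coincides with $\phi_{y_0}-C$ there (and symmetrically at $y_0$). Hence your claimed asymptotics $\phi(z)\sim t\log|z-p|^2$ fail, and the multiplier-ideal vanishing you invoke at the translates of $\tilde x_0$ and $\tilde y_0$ has no basis. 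The natural repair is to average, $\phi=\tfrac12(\phi_{x_0}+\phi_{y_0})$, which preserves the curvature inequality $i\partial\dbar\phi+i\Theta(K_X,h)\ge 0$ but halves the pole weights to $t/2$, giving Lelong number $(m-2)t/2>n$ at each translate. That forces $u$ to vanish there but not its first derivatives, so your combined reduction---prescribing a full $1$-jet at $x_0$ \emph{and} a value at $y_0$ with a single section---cannot be carried out under the hypothesis $(m-2)\varepsilon(K_X)>2n$; it would need a strictly stronger bound (roughly $(m-2)\varepsilon(K_X)>2n+1$).

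The fix is to decouple the two requirements, as the paper does: for the immersion condition use the single-point weight $\psi^x=P^*(\phi^{P(x)})+\log K_\Omega$ at full strength, whose Lelong number $(m-2)t>2n\ge n+1$ yields $1$-jet vanishing of $u$ at every $\gamma(x)$ (Lemma~\ref{le:jet}); for injectivity use the averaged weight $\psi^{x,y}$ and observe that separating two points requires only the values of $u$ to vanish at the translates, for which Lelong number $>n$ suffices (Lemma~\ref{le:2point}). With that reorganization the remaining steps of your argument---finiteness of $\|f\|_{2,m-2}$ from $\phi\le 0$, well-definedness of $P_m(f)$ for $f\in S''$ via the Schwartz inequality and Lemma~\ref{le:poincare}, and termwise evaluation using $j_{\mathrm{id}}\equiv 1$---go through exactly as in the paper.
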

If the Bergman metric
\begin{equation*}
    \omega:=i\partial\dbar \log K_\Omega
\end{equation*}
satisfies that
\begin{equation*}
    C(\Omega):=\sup_{z\in\Omega}|\dbar\log K_\Omega|^2_\omega(z)<\infty,
\end{equation*}
then one may use Donnely-Fefferman's theorem (see \cite{DF83} or \cite{BernChar00}) to show that
\begin{equation}\label{eq:DF}
    (m-2+C(\Omega)^{-1})\varepsilon(K_X)> 2n \Rightarrow mK_X \ \text{is} \  S''\  \text{very ample}.
\end{equation}
If $\Omega$ is bounded homogeneous, by a result of Kai-Ohsawa \cite{K-O07} (see also \cite{V-G-P63}), one may choose a domain $\Omega'\subset \C^n$, which is biholomorphic equivalent to $\Omega$ such that
\begin{equation*}
|\dbar \log K_{\Omega'}|^2_{i\partial\dbar\log K_{\Omega'}} \equiv \ \text{constant}.
\end{equation*}
More precisely, let
\begin{equation*}
\Omega:=\{(u,v)\in\mathbb C^p \times \mathbb C^q : \  v+\bar v-H(u,u)\in V\}
\end{equation*}
be a Siegel domain of second kind defined by $V$ and $H$, where $V$ is a convex cone in $\mathbb R^q$ containing no entire straight lines and $H$ is $V$-Hermitian, Ishi \cite{Ishi} proved that
\begin{equation*}
C(\Omega)\leq p+2q.
\end{equation*}
Thus if $\Omega$ is bounded symmetric, the above remark also gives an effective $S''$ very ampleness criterion for $2K_X$ (for effective very ampleness criterion of $K_X$ itself, see \cite{Xu} and \cite{Yeung00}).

By a theorem of Hwang and To \cite{HT00}, if $\Omega$ is a bounded symmetric domain then $\varepsilon(K_X,x)$ can be estimated by the injectivity radius
\begin{equation*}
    \rho_x:=\frac 12 \inf_{\gamma\in \Gamma\setminus \{1\}} \rho(x,\gamma x),
\end{equation*}
where $\rho$ is distance function associated to $\omega$. In general, we shall prove:

\begin{proposition}\label{pr:inj-Seshadri} If for every fixed $x\in\Omega$,
\begin{equation}\label{eq:logd}
    \log \rho(x,\cdot) \ \text{is plurisubharmonic on} \ \Omega,
\end{equation}
then
\begin{equation}\label{eq:inj}
    \varepsilon(K_X, \pi(x)) \geq \frac {\rho^2_x}2, \ \forall \ x\in\Omega,
\end{equation}
where $\pi:\Omega\to \Omega/\Gamma=X$ denotes the universal covering mapping.
\end{proposition}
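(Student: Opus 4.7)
The plan is to exhibit an explicit quasi-plurisubharmonic function $\phi$ in the class $H(\rho_x^2/2,P(x))$ used to characterize the Seshadri constant; this will immediately give $\varepsilon(K_X,P(x))\geq\rho_x^2/2$. Set $t:=\rho_x^2/2$. Since $\Gamma$ acts on $(\Omega,\omega)$ by biholomorphic isometries of the Bergman metric, $\rho$ is $\Gamma$-invariant, and the definition of $\rho_x$ together with the triangle inequality forces the open balls $B(\gamma x,\rho_x)\subset\Omega$, $\gamma\in\Gamma$, to be pairwise disjoint. I will take $\phi$ to descend from the $\Gamma$-invariant radial function
\begin{equation*}
\Phi(z) = \begin{cases} f\bigl(\rho(\gamma x,z)\bigr), & z \in B(\gamma x,\rho_x),\\ 0, & z \notin \bigcup_{\gamma\in\Gamma}B(\gamma x,\rho_x), \end{cases}
\end{equation*}
for a single radial profile $f:(0,\rho_x]\to\R$ (extended by zero for $\rho\geq\rho_x$), to be determined.

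Three requirements on $f$ pin it down. First, in order that the distributional $i\partial\dbar\Phi$ carry no Dirac-type contribution along $\partial B(\gamma x,\rho_x)$, I need $C^1$-matching: $f(\rho_x)=f'(\rho_x)=0$. Second, for the prescribed log-singularity at $P(x)$ I need $f(\rho)=2t\log(\rho/\rho_x)+O(1)$ as $\rho\to 0$, using that $\rho(x,z)\sim|z-x|$ up to a positive constant in local coordinates. Third, provided $f'\geq 0$, the hypothesis \eqref{eq:logd} in its equivalent form $\rho\,i\partial\dbar\rho\geq i\partial\rho\wedge\dbar\rho$ yields the pointwise lower bound
\begin{equation*}
i\partial\dbar f(\rho) \geq \bigl(f'(\rho)/\rho + f''(\rho)\bigr)\,i\partial\rho\wedge\dbar\rho
\end{equation*}
on each ball. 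Since the Bergman distance has unit Riemannian gradient, $|\partial\rho|^2_\omega=1/2$, hence $i\partial\rho\wedge\dbar\rho\leq\tfrac12\omega$ as $(1,1)$-forms pointwise, and so $i\partial\dbar\Phi+\omega\geq 0$ reduces to the scalar inequality $h(\rho):=f'(\rho)/\rho+f''(\rho)\geq-2$ on $(0,\rho_x)$.

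In terms of $F(\rho):=\rho f'(\rho)$, this scalar condition becomes $F'(\rho)\geq -2\rho$ with boundary data $F(0^+)=2t$ and $F(\rho_x)=0$; integrating over $(0,\rho_x)$ yields $-2t\geq -\rho_x^2$, i.e., $t\leq\rho_x^2/2$, with equality saturated by the explicit profile
\begin{equation*}
f(\rho) = \rho_x^2\log(\rho/\rho_x) + \tfrac12(\rho_x^2-\rho^2), \qquad 0<\rho\leq\rho_x.
\end{equation*}
Direct differentiation confirms $f(\rho_x)=f'(\rho_x)=0$, $f'(\rho)=\rho_x^2/\rho-\rho\geq 0$ on $(0,\rho_x]$, and $h(\rho)\equiv -2$. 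The function $\Phi$ built from this $f$ is $C^1$ on $\Omega\setminus\Gamma\cdot x$, and by $\Gamma$-invariance descends to the desired $\phi\in C(X\setminus\{P(x)\})$, with $i\partial\dbar\phi+\omega\geq 0$ on $X$ and $\phi(z)\sim\rho_x^2\log|z-P(x)|$ near $P(x)$, so $\phi\in H(\rho_x^2/2,P(x))$. The main obstacle is precisely this extremal-$f$ step: the bound $\rho_x^2/2$ is tight in the ansatz because the ODE constraint $F'\geq -2\rho$, which reflects the sharp Bergman-metric estimate $|\partial\rho|^2_\omega=\tfrac12$, combined with the boundary values $F(0^+)=2t$ and $F(\rho_x)=0$, forces $t\leq\rho_x^2/2$ and determines $f$ uniquely up to an additive constant.
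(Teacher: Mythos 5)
Your proposal is correct and is essentially the paper's own argument: the profile $f(\rho)=\rho_x^2\log(\rho/\rho_x)+\tfrac12(\rho_x^2-\rho^2)$ is exactly $\tfrac{\rho_x^2}{2}\,a\bigl(\log(\rho^2/\rho_x^2)\bigr)$ with the Berndtsson--Cerd\`a cut-off $a(t)=1+t-e^t$ used in Section 5, and the same ingredients (disjointness of the balls $B(\gamma x,\rho_x)$, the bound $i\partial\rho\wedge\dbar\rho\leq\tfrac12\omega$, the $C^{1}$ matching at $\rho=\rho_x$, and the hypothesis \eqref{eq:logd}) appear in the same roles. The only difference is that you specialize directly to $r=\rho_x$ instead of passing through the density function $D(r,x)$ of Proposition \ref{pr:density-Seshadri}.
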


It is known that \eqref{eq:logd} is true if the the sectional curvature of $\omega$ is non-positive (see \cite{GW79}). In particular, every bounded symmetric domain satisfies \eqref{eq:logd}.

In the last section, we shall also use
\begin{equation*}
    D(r,x):=\sup_{z\in\Omega} \frac {\sharp(\Gamma(x)\cap \{\rho(\cdot,z)< r\})}{r^2}
\end{equation*}
to estimate $\varepsilon(K_X)$. We shall prove the following result (generalization of Proposition \ref{pr:inj-Seshadri}):

\begin{proposition}\label{pr:density-Seshadri} If $\Omega$ satisfies \eqref{eq:logd} then
\begin{equation}\label{eq:density-Seshadri}
    \varepsilon(K_X,\pi(x)) \geq \frac 1{2 D(r,x)}, \ \forall \ r>0, \ x\in \Omega.
\end{equation}
where $\pi:\Omega\to \Omega/\Gamma=X$ denotes the universal covering mapping.
\end{proposition}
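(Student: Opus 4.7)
The plan is to apply the potential characterization of the Seshadri number recalled in the introduction: it suffices, for every $t<\frac{1}{2D(r,x)}$, to produce a quasi-plurisubharmonic function $\phi \in C(X \setminus \{P(x)\})$ with
$$\liminf_{z \to P(x)} \frac{\phi(z)}{\log|z-P(x)|^2} \geq t, \qquad i\partial\dbar \phi + \omega_X \geq 0,$$
where $\omega_X$ denotes the Bergman metric on $X$ (the curvature form of the natural metric on $K_X$). I will produce such a $\phi$ by lifting to $\Omega$ and assembling a $\Gamma$-invariant function $\tilde\phi$ with a prescribed logarithmic singularity at every point of the orbit $\Gamma x$.

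The candidate is
$$\tilde\phi(z) = t \sum_{\gamma \in \Gamma} \chi\!\left(\log \frac{\rho(\gamma x, z)^2}{r^2}\right),$$
where $\chi : \R \to (-\infty, 0]$ is a smooth nondecreasing Berndtsson--Cerd\`a-type cut-off: $\chi(s)=s$ for $s\leq -A$ and $\chi(s)=0$ for $s\geq 0$, chosen concave on the transition region. Each summand is supported in $\{\rho(\gamma x,\cdot)<r\}$, and by the definition of $D(r,x)$ at most $r^2 D(r,x)$ summands are nonzero at any $z$, so the series is locally finite and $\Gamma$-invariant, hence descends to $X$. Near $\gamma_0 x$ only the $\gamma_0$-summand is singular; because $\rho(\gamma_0 x,z)^2 \asymp |z-\gamma_0 x|^2$ in local coordinates, the Lelong number of $\tilde\phi$ at $\gamma_0 x$ equals $t$.

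The heart of the proof is the inequality $i\partial\dbar \tilde\phi + \omega \geq 0$ on $\Omega$, where $\omega=i\partial\dbar \log K_\Omega$. Writing $u_\gamma = \log(\rho(\gamma x,z)^2/r^2)$, the chain rule gives
$$i\partial\dbar \chi(u_\gamma) = \chi'(u_\gamma)\, i\partial\dbar u_\gamma + \chi''(u_\gamma)\, i\partial u_\gamma \wedge \dbar u_\gamma.$$
Hypothesis \eqref{eq:logd} yields $i\partial\dbar u_\gamma \geq 0$, and since $\chi'\geq 0$ the first term is nonnegative; the negative part comes from $\chi''\leq 0$ on the transition region. To absorb this into $\omega$, I will use that on the support of $\chi''$ the quantity $i\partial u_\gamma\wedge \dbar u_\gamma$ is controlled by a multiple of $r^{-2}\omega$ (via the $\omega$-Lipschitz property of the Bergman distance), together with the fact that at most $r^2 D(r,x)$ summands are simultaneously active. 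Tuning the profile of $\chi$ so that the resulting Berndtsson--Cerd\`a-style estimate is optimal gives, after summation and multiplication by $t$,
$$t\sum_\gamma \chi''(u_\gamma)\, i\partial u_\gamma \wedge \dbar u_\gamma \geq -2tD(r,x)\,\omega,$$
so that $2tD(r,x)\leq 1$ implies $i\partial\dbar \tilde\phi+\omega\geq 0$.

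The principal obstacle is the final estimate: controlling the distributional negative contribution of $\chi''(u_\gamma)\,i\partial u_\gamma\wedge \dbar u_\gamma$ by $2D(r,x)\,\omega$ with the optimal constant $2$. This forces one to work not with pointwise bounds on $|\partial u_\gamma|^2_\omega$ (which would be too weak at small $\rho$) but with averaged estimates in the $u_\gamma$-variable, integrated against a suitable weight --- exactly what the Berndtsson--Cerd\`a cut-off is designed to achieve. Once this estimate is in hand, $\tilde\phi$ descends to the desired quasi-PSH function on $X\setminus\{P(x)\}$ with Lelong number $t$ at $P(x)$, and the proposition follows by letting $t\nearrow \frac{1}{2D(r,x)}$. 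Specializing to $r=\rho_x$, where $D(\rho_x,x)=\rho_x^{-2}$, recovers Proposition \ref{pr:inj-Seshadri}.
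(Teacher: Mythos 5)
Your overall strategy is the paper's: compose $\log(\rho(\gamma x,\cdot)^2/r^2)$ with a concave Berndtsson--Cerd\`a cut-off, sum over the orbit, count the active terms by $r^2D(r,x)$, and descend to $X$. But the one step that carries all the analytic content --- the current inequality $\sum_\gamma\chi''(u_\gamma)\,i\partial u_\gamma\wedge\dbar u_\gamma\geq -2D(r,x)\,\omega$ --- is asserted, not proved, and the mechanism you point to would not deliver it. You correctly observe that the pointwise bound fails at small $\rho$ (indeed $i\partial u_\gamma\wedge\dbar u_\gamma=4\rho^{-2}\,i\partial\rho\wedge\dbar\rho$ blows up like $\rho^{-2}$), but the cure is not an ``averaged estimate integrated against a weight'': what is needed is a \emph{pointwise} inequality between currents, and averaging in the $u_\gamma$-variable has no evident meaning for it. The actual fix is to make $\chi''$ itself supply the cancellation: since $2|\dbar\rho|^2_\omega=|d\rho|^2_\omega\leq 1$, one has the exact pointwise identity
\begin{equation*}
e^{u_\gamma}\,i\partial u_\gamma\wedge\dbar u_\gamma=\frac{4\,i\partial\rho\wedge\dbar\rho}{r^2}\leq\frac{2\omega}{r^2},
\end{equation*}
so it suffices to demand $\chi''(t)\geq -c\,e^{t}$; then each summand satisfies $i\partial\dbar\chi(u_\gamma)\geq -2c\,r^{-2}\omega$ on its support and the count by $r^2D(r,x)$ gives $-2cD(r,x)\,\omega$. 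Solving $a''=-e^{t}$ on $(-\infty,0)$ with $a(0)=a'(0)=0$ yields $a(t)=1+t-e^{t}$, $c=1$, and the constant $2$ on the nose, with no limiting argument in an auxiliary parameter.

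There is also a concrete flaw in the cut-off you specify: a function with $\chi(s)=s$ for $s\leq -A$, $\chi\equiv 0$ for $s\geq 0$, concave and $C^1$ on the transition region, does not exist. Concavity forces the graph to lie below the tangent line $y=s$ at $s=-A$ and above the chord joining $(-A,-A)$ to $(0,0)$, which is the same line; hence $\chi(s)=s$ on $[-A,0]$, contradicting $\chi'(0)=0$. One must give up exact linearity and settle for the asymptotic normalization $\lim_{t\to-\infty}a(t)/t=1$ (equivalently, allow an additive shift, as in $a(t)=1+t-e^{t}$, which is moreover only $C^{1,1}$, not smooth --- harmless since the inequality is taken in the sense of distributions). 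This does not change the Lelong number at $P(x)$, so the rest of your argument (descent to $X$, scaling by $t$, and letting $t\nearrow 1/(2D(r,x))$) goes through once the two points above are repaired.
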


The proof of Proposition \ref{pr:density-Seshadri} relies heavily on the special cut-off function $u_r$ (see Theorem 1 in \cite{BC95} by B. Berndtsson and J. O. Cerd$\grave{a}$) used in the weighted interpolation problem. $u_r$ also plays an important role in \cite{Mourougane03} and \cite{MT00}. In particular, one may also use the density function
$D_{\Lambda,\kappa}(x)$ (see \cite{Lan67}, \cite{Seip92,Seip93,SeipWallsten92} and \cite{BC95}) to estimate the lower bound of $\varepsilon(K_X,x)$. We leave it to the interested reader.

\section{Surjectivity of the Poincar\'e map}

We shall prove Theorem~\ref{th:l1} in this section. Let $\Omega$ be a bounded symmetric domain in $\C^n$ with $0\in \Omega$, then it is circular and star-shaped with respect to the origin, that is, $tz\in \Omega$ for each $z\in\Omega$ and $t\in\C$ with $|t|\leq 1$ (see \cite{KW65}).

Let $f$ be a holomorphic function on $\Omega$ such that $\|f\|_{1,l}<\infty$ for some $l\geq0$. For every fixed $0 \leq r<1$, $f_r(z):=f(rz)$ is a holomorphic function on a neighborhood of $\overline{\Omega}$. We want to show that
\begin{equation*}
\|f_r-f\|_{1,l}\to 0,
\end{equation*}
as $r\to 1$.

For $w\in \C$, $|w|<1$, let $F(w) = \int_{\Omega} |f(wz)|K_{\Omega}(z,z)^{-l}$, $F$ is defined since $\Omega$ is star-shaped and circular. Also, since on $\Omega$ there exists a complete orthonormal system of complex homogeneous polynomials we know that $ K_{\Omega}(z,z)^{-l}$ is invariant under $z \rightarrow e^{i\theta}z$, $F$ is radial, i.e. $F(w)= F(|w|)$. Thus $F(r)$ is non-increasing function of $r$, $0\leq r < 1$.

Put
\begin{equation*}
    g(t)=K_{\Omega}(tz,tz).
\end{equation*}
We know that $g$ is a subharmonic function on the unit disc.
Since $\Omega$ is circular then $g(te^{i\theta})=g(t)$.
Thus $g(e^r)$, $-\infty<r<0$, is a convex function of $r$. Since $g$ is bounded near the origin we have $g(e^r)$ is an increasing function of $r$. Thus by 
\begin{equation}\label{eq:K(t)}
    K(tz, tz)\leq K(z,z), \ \forall \ z\in\Omega, \ t\in \C  \ \ \text{with} \ \  0<|t|\leq 1.
\end{equation}
Let $f$ be a holomorphic function on $\Omega$ such that $\|f\|_{1,l}<\infty$ for some $l\geq0$. For every fixed $0 \leq r<1$, $f_r(z):=f(rz)$ is a holomorphic function on a neighborhood of $\overline{\Omega}$. We want to show that
\begin{equation*}
    \|f_r-f\|_{1,l}\to 0,
\end{equation*}
as $r\to 1$. Since $\Omega$ is circular, we have
\begin{equation*}
   \|f_r-f\|_{1,l}=\frac{1}{2\pi}\int_0^{2\pi}\left(\int_{\Omega}|f(re^{i\theta}z)-f(e^{i\theta}z)|
    K_{\Omega}(z,z)^{-l}\right) d\theta.
\end{equation*}
For $w\in \C$, $|w|<1$, let $F(w) = \int_{\Omega} |f(wz)|K_{\Omega}(z,z)^{-l}$, $F$ is defined since $\Omega$ is star-shaped and circular. Also, since on $\Omega$ there exists a complete orthonormal system of complex homogeneous polynomials we know that $ K_{\Omega}(z,z)^{-l}$ is invariant under $z \rightarrow e^{i\theta}z$, $F$ is radial, i.e. $F(w)= F(|w|)$. Thus $F(r)$ is non-increasing function of $r$, $0\leq r < 1$.

Since $\int_0^{2\pi}|f(te^{i\theta}z)|d\theta$ is an increasing function of $t$, we have
\begin{equation*}
    \int_0^{2\pi} |f(te^{i\theta}z)-f(e^{i\theta}z)| d\theta \leq 2\int_0^{2\pi}|f(e^{i\theta}z)|d\theta.
\end{equation*}
Thus $\|f^t-f\|_{1,l}\to 0$. Notice that $f^t$ can be written as $\sum f^j$ on $t^{-1} \Omega$, where $f^j$ are homogeneous polynomials of degree $j$. Thus
\begin{equation*}
    \sup_{z\in\Omega} |f^t(z)-\sum_{j=0}^N f^j(z)| \to 0,
\end{equation*}
as $N\to\infty$, which implies that for every sufficiently small $\delta>0$, there exists a polynomial $h$ such that
\begin{equation*}
    \|f-h\|_{1,l}<\delta.
\end{equation*}
Thus Theorem \ref{th:l1} follows from Lemma \ref{le:l1} and Proposition \ref{pr:l1}. Lemma \ref{le:l1} follows easily from the sub-mean value inequality, but the proof of Proposition \ref{pr:l1} is not easy. For reader's convenience, we include a proof here.

\begin{proof}[Proof of Proposition \ref{pr:l1}] Let $\{\sigma_j\}_{0\leq j\leq N}$ be an orthonormal base of $H^0(X, mK_X)$. Put
\begin{equation*}
    h_j(z)(dz_1\wedge  \cdots \wedge dz_n)^{\otimes m}=(\pi^*\sigma_j)(z),
\end{equation*}
where 
%$dz$ is short for $dz^1\wedge \cdots \wedge dz^n$ and 
$\pi: \Omega\to \Omega/\Gamma=X$ is the universal covering mapping. Let $F\subset \Omega$ be a Dirichlet fundamental domain. Let $\chi_F$ be its characteristic function. Denote by $K_m(z,w)$ the reproducing kernel of the space of holomorphic functions on $\Omega$ such that
\begin{equation*}
    \|f\|_{2,m-1}:=\int_\Omega |f(z)|^2 (K_\Omega(z,z))^{1-m} < \infty.
\end{equation*}
Put
\begin{equation}\label{eq:fj}
    f_j(z)=\int_\Omega (\chi_F h_j)(w) K_m(z,w) (K_\Omega(w,w))^{1-m}.
\end{equation}
It suffices to show that $\|f_j\|_{1,(m-2)/2} <\infty$ and
\begin{equation*}
    P_m(f_j)=h_j,
\end{equation*}
for every $0\leq j\leq N$.
\end{proof}

\begin{lemma}\label{le:fj} $\|f_j\|_{1,(m-2)/2} <\infty$.
\end{lemma}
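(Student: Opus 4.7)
The plan is to show that $f_j$ is in fact a bounded holomorphic function on $\Omega$; once this is established, the lemma follows at once from the integrability of the weight $K_\Omega^{-(m-2)/2}$ over the bounded domain $\Omega$ for $m\ge 2$.

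Since $X=\Omega/\Gamma$ is compact and $P:\Omega\to X$ is a covering, I would first choose the Dirichlet fundamental domain $F$ so that its closure $\bar F$ is a compact subset of $\Omega$. Then both $|h_j|$, which is continuous as the lift of a smooth section on the compact manifold $X$, and $K_\Omega(w,w)^{1-m}$ are bounded on $\bar F$, so
\begin{equation*}
|f_j(z)|\le M_j\int_F|K_m(z,w)|\,dV(w),\quad z\in\Omega,
\end{equation*}
for some constant $M_j<\infty$. The crux is then to bound $K_m(z,w)$ uniformly on $\Omega\times\bar F$. Here the bounded-symmetric assumption enters decisively: in the Harish-Chandra realization the Bergman kernel has the explicit form $K_\Omega(z,w)=c\,\Delta(z,w)^{-g}$, where $\Delta(z,w)$ is a polynomial non-vanishing on $\bar\Omega\times\bar F$, and the weighted kernel satisfies $K_m(z,w)=c_m\,K_\Omega(z,w)^m$. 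By compactness of $\bar\Omega\times\bar F$ this gives $\sup_{(z,w)\in\bar\Omega\times\bar F}|K_m(z,w)|<\infty$, so $f_j\in L^\infty(\Omega)$.

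To conclude, note that the constant function $1/\sqrt{\mathrm{vol}(\Omega)}$ lies in the Bergman space, so $K_\Omega(z,z)\ge 1/\mathrm{vol}(\Omega)>0$ throughout $\Omega$; hence for $m\ge 2$ the weight $K_\Omega(z,z)^{-(m-2)/2}$ is bounded above, and integration over the bounded domain $\Omega$ yields $\int_\Omega K_\Omega(z,z)^{-(m-2)/2}\,dV(z)<\infty$. Combined with the pointwise bound on $f_j$ this gives $||f_j||_{1,(m-2)/2}<\infty$.

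The main obstacle I anticipate is the uniform pointwise control of $K_m(z,w)$ on $\Omega\times\bar F$: a plain Cauchy--Schwarz estimate $|K_m(z,w)|\le K_m(z,z)^{1/2}K_m(w,w)^{1/2}$ yields only a bound of order $K_\Omega(z,z)^{m/2}$, and $K_\Omega^{m/2}\cdot K_\Omega^{-(m-2)/2}=K_\Omega$ is not integrable on $\Omega$. One therefore has to appeal to the Jordan-algebraic structure of the bounded symmetric domain (equivalently, to the explicit Harish-Chandra form of the Bergman kernel) to secure the required pointwise bound; if that route is to be avoided, an alternative is to exchange the order of integration via Fubini and invoke a Forelli--Rudin type integral identity to estimate $\int_\Omega|K_m(z,w)|K_\Omega(z,z)^{-(m-2)/2}dV(z)$ directly on the compact set $\bar F$.
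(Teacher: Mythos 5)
Your proof is correct, but it takes a genuinely different route from the paper's. The paper never invokes the Harish--Chandra realization or an explicit kernel formula: it exchanges the order of integration and observes that the resulting quantity $A(w)=K_\Omega(w,w)^{-m/2}\int_\Omega|K_m(z,w)|\,K_\Omega(z,z)^{1-m/2}$ is invariant under ${\rm Iso}(\Omega)$ by the transformation law \eqref{eq:fj2} of the weighted kernel, hence constant by homogeneity, and then bounds $||f_j||_{1,(m-2)/2}$ by $c_m\sup_{w\in F}|h_j(w)|K_\Omega(w,w)^{-m/2}$. This is essentially the Fubini--Forelli--Rudin fallback you sketch at the end, with the integral identity replaced by a soft invariance argument. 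What your route buys is a strictly stronger conclusion: $f_j\in L^\infty(\Omega)$, which would place $P_m(f_j)$ directly in $S$ and largely bypass the polynomial-approximation step of Section 3. What it costs is precisely the structure theory (non-vanishing of the generic norm on $\bar\Omega\times\bar F$, the identity $K_m=c_m K_\Omega^m$) that the paper advertises avoiding in its ``elementary'' proof of Earle--Resnikoff: for a general bounded homogeneous domain the function $z\mapsto K_m(z,w)$ need not be bounded, whereas the invariance argument uses only transitivity of ${\rm Iso}(\Omega)$. Your diagnosis of the crux is on target --- plain Cauchy--Schwarz does fail --- and it is worth noting that the paper's own proof tacitly assumes the invariant constant $c_m$ is finite; the pointwise bound on $K_m(z,w)$ for $w$ in a compact set (or a Forelli--Rudin type estimate) of the kind you supply is exactly what is needed to justify that point.
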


\begin{proof} By definition, we have
\begin{equation}\label{eq:fj1}
    \|f\|_{2,m-1}=\|(f\circ\gamma) j_\gamma^m \|_{2,m-1}, \ \forall \ \gamma\in {\rm Iso}(\Omega).
\end{equation}
Thus
\begin{equation}\label{eq:fj2}
    K_m(\gamma z, \gamma w) j_\gamma(z)^m \overline{j_\gamma(w)}^m = K_m (z,w), \ \forall \ \gamma\in {\rm Iso}(\Omega).
\end{equation}
Put
\begin{equation*}
    A(w)=K_\Omega (w,w)^{-m/2} \int_\Omega |K_m(z,w)|K_\Omega(z,z)^{1-\frac m2}.
\end{equation*}
By \eqref{eq:fj2}, we have
\begin{equation*}
    A(\gamma w)=A(w), \ \forall \ \gamma\in {\rm Iso}(\Omega).
\end{equation*}
Since $\Omega$ is homogeneous, we have that $A(\gamma w)$ is a constant. Let's denote it by $c_m$. Since the closure of $F$ is compact, we have
\begin{equation*}
    \|f_j\|_{1,(m-2)/2} \leq c_m \sup_{w\in F} |h_j(w)| K_\Omega(w,w)^{-m/2}<\infty.
\end{equation*}
The proof of Lemma \ref{le:fj} is complete.
\end{proof}

\begin{lemma}\label{le:pmfj} $P_m(f_j)=h_j$.
\end{lemma}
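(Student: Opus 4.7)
The plan is to identify both $P_m(f_j)(z)$ and $h_j(z)$ with the single integral
\[ I(z):=\int_\Omega h_j(u)\,K_m(z,u)\,K_\Omega(u,u)^{1-m}\,dV(u). \]

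First I will verify absolute convergence of $I(z)$ and establish $P_m(f_j)=I$. Since $\sigma_j$ is a smooth section of $mK_X$ on the compact quotient $X$, its lift obeys the pointwise bound $|h_j(u)|\leq C\,K_\Omega(u,u)^{m/2}$. Combined with the constancy of the function $A$ proved in Lemma~\ref{le:fj}, this gives $\int_\Omega|h_j(u)||K_m(z,u)|K_\Omega(u,u)^{1-m}\,dV(u)\leq c_m C\,K_\Omega(z,z)^{m/2}<\infty$. Substituting the definition of $f_j$ into $\sum_\gamma j_\gamma(z)^mf_j(\gamma z)$, performing the change of variable $u=\gamma^{-1}w$ inside the $\gamma$-th summand, and applying the transformation identities \eqref{eq:fj1} and \eqref{eq:fj2} together with the automorphy $h_j(\gamma u)j_\gamma(u)^m=h_j(u)$, the $\gamma$-th term collapses to $\int_{\gamma^{-1}F}h_j(u)K_m(z,u)K_\Omega(u,u)^{1-m}\,dV(u)$. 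Absolute convergence then reassembles the sum over $\Gamma$ (whose translates of $F$ tile $\Omega$) into $I(z)$, whence $P_m(f_j)(z)=I(z)$.

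Next I will show $I(z)=h_j(z)$ via the dilation trick already employed earlier in this section. Since $\Omega$ may be taken to be bounded star-shaped circular, for each $t\in(0,1)$ the dilate $h_j^t(u):=h_j(tu)$ extends holomorphically across $\partial\Omega$ and is therefore bounded on $\Omega$. Because $K_\Omega^{1-m}$ is bounded on $\Omega$ for $m\geq 2$, $h_j^t$ lies in the weighted Bergman space with reproducing kernel $K_m$, so
\[ h_j^t(z)=\int_\Omega h_j^t(u)\,K_m(z,u)\,K_\Omega(u,u)^{1-m}\,dV(u). \]
As $t\to 1^-$, the left side tends to $h_j(z)$ by continuity. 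On the right, the monotonicity \eqref{eq:K(t)} yields the uniform bound $|h_j^t(u)|\leq C\,K_\Omega(u,u)^{m/2}$, so the integrand is dominated by the integrable majorant $C|K_m(z,u)|K_\Omega(u,u)^{1-m/2}$ from the preceding paragraph. Dominated convergence then gives $h_j(z)=I(z)$, and combining with $P_m(f_j)(z)=I(z)$ completes the proof.

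The conceptual obstacle is that $h_j$ itself does not belong to the weighted Bergman space: by $\Gamma$-periodicity of $|h_j|^2 K_\Omega^{-m}$, its $L^2$ norm over all of $\Omega$ is infinite, so the usual Bergman reproducing formula cannot be applied to $h_j$ directly. The star-shaped circular realization of $\Omega$ is precisely what lets the dilation argument regularize $h_j$ into a sequence inside the Bergman space, while the pointwise bound supplied by compactness of $X$ together with \eqref{eq:K(t)} furnishes the dominating function needed to pass to the limit.
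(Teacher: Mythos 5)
Your proof is correct, but it reaches the conclusion by a more direct route than the paper. The paper never identifies $P_m(f_j)(z)$ pointwise with the integral $I(z)$; it unfolds only at the level of inner products, showing $\int_F P_m(f_j)\overline{h_k}K_\Omega^{1-m}=\int_\Omega f_j\overline{h_k}K_\Omega^{1-m}$ for every $k$, then uses the reproducing identity \eqref{eq:pmfj2} (proved by exactly your dilation-plus-dominated-convergence argument) to evaluate the right-hand side as $\int_F h_j\overline{h_k}K_\Omega^{1-m}=\delta_{jk}$, and finally concludes $P_m(f_j)=h_j$ from the orthonormality of $\{\sigma_k\}$ together with the fact that $P_m(f_j)$ descends to a section in the finite-dimensional space $H^0(X,mK_X)$. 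Your term-by-term unfolding of the Poincar\'e series, justified by the absolute convergence bound $\int_\Omega|h_j|\,|K_m(z,\cdot)|\,K_\Omega^{1-m}\leq c_m C\,K_\Omega(z,z)^{m/2}$, avoids the duality step entirely and needs neither orthonormality of the basis nor the a priori fact that $P_m(f_j)$ lies in the span of the $h_k$; note only that converting the constant $A$ of Lemma~\ref{le:fj}, which integrates $|K_m|$ over its \emph{first} variable, into a bound on your integral over the \emph{second} variable uses the Hermitian symmetry $K_m(z,u)=\overline{K_m(u,z)}$, a point worth making explicit. The trade-off is that you must justify interchanging the sum over $\Gamma$ with the integral and the tiling of $\Omega$ by the translates of $F$, whereas the paper only invokes the standard unfolding of a $\Gamma$-invariant integrand over a fundamental domain. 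The shared technical heart --- regularizing $h_j$ by dilation so that the reproducing property of $K_m$ applies, then passing to the limit $t\to 1$ with the majorant supplied by \eqref{eq:K(t)} and the boundedness of $|h_j|K_\Omega^{-m/2}$ on the compact quotient --- is identical in the two arguments.
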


\begin{proof} By Lemma \ref{le:fj} and Lemma \ref{le:l1}, $P_m(f_j)$ are well defined. By definition, we have
\begin{equation}\label{eq:pmfj1}
    \int_F P_m(f_j) \overline{h_k} K_\Omega^{1-m}= \int_\Omega f_j \overline{h_k} K_\Omega^{1-m}, \ \forall\ 0\leq j,k \leq N.
\end{equation}
We claim that
\begin{equation}\label{eq:pmfj2}
    \int_\Omega K_m(z,w) \overline{h_k(z)} (K_\Omega(z,z))^{1-m}= \overline{h_k(w)}, \ \forall\ 0\leq k \leq N.
\end{equation}
Notice that for every fixed $k$ and $0<t<1$, $h_k(t\cdot)$ is a holomorphic function on $t^{-1}\Omega$. Thus $\|h_k(t\cdot)\|_{2,m-1} < \infty$. By the reproducing property of $K_m$, we have
\begin{equation}\label{eq:pmfj3}
    \int_\Omega K_m(z,w) \overline{h_k(tz)} (K_\Omega(z,z))^{1-m}= \overline{h_k(tw)}, \ \forall\ 0\leq k \leq N.
\end{equation}
By \eqref{eq:K(t)}, for every $0<t<1$,
\begin{equation*}
    \sup_{z\in\Omega} \big| h_k(tz)(K_\Omega(z,z))^{-m/2} \big| \leq
    \sup_{z\in\Omega} \big| h_k(tz)(K_\Omega(tz,tz))^{-m/2} \big|.
\end{equation*}
Notice that, by definition, we have
\begin{equation*}
    \|h_k\|_{\infty}:=\sup_{z\in\Omega} \big| h_k(z)(K_\Omega(z,z))^{-m/2} \big| =
    \sup_{z\in F} \big| h_k(z)(K_\Omega(z,z))^{-m/2} \big| < \infty.
\end{equation*}
Thus the integrand of the left hand side of \eqref{eq:pmfj3} is dominated by
\begin{equation*}
    \|h_k\|_{\infty} |K_m(z,w)| K_\Omega(z,z)^{1-\frac m2},
\end{equation*}
which is integrable on $\Omega$ by the proof of Lemma \ref{le:fj}. Let $t\to 1$, we get \eqref{eq:pmfj2}. Thus
\begin{equation*}
    \int_\Omega f_j \overline{h_k} K_\Omega^{1-m}= \int_F h_j \overline{h_k} K_\Omega^{1-m}.
\end{equation*}
By \eqref{eq:pmfj1}, we have $P_m(f_j)=h_j$.
\end{proof}

\section{Very ampleness of the pluri-canonical line bundles}

We shall prove Theorem~\ref{th:very-ample-Seshadri-Poincare} and \eqref{eq:DF} in this section.

Let $f$ be a holomorphic function on $\Omega$ such that $\|f\|_{2,m-2} <\infty$. Notice that
\begin{equation*}
\|f\|_{2,m-2}=\|f^2\|_{1, m-2},
\end{equation*}
and $m-2=\frac{2m-2}{2}-1$. By Lemma \ref{le:l1}, we have
\begin{equation}\label{eq:f2}
  \int_X \|P_{2m-2}(f^2)\|K_\Omega^{(2-(2m-2))/2} \leq \|f^2\|_{1, m-2}.
\end{equation}
By Schwarz inequality, we have
\begin{equation*}
    \|P_m(f)\|(z)^2 \leq \|P_{2m-2}(f^2)\|(z) \sum |j_\gamma(z)|^2.
\end{equation*}
By \eqref{eq:f2} and Lemma \ref{le:poincare}, we know that $P_m(f)$ is well defined. Denoted by $h$ the Hermitian metric on $K_X$ defined by the Bergman kernel $K_\Omega(z,z)$. By definition, we have
\begin{equation*}
    P^*\left(i\Theta(K_X, h)\right)=i\partial\dbar \log K_{\Omega},
\end{equation*}
where $P: \Omega\to \Omega/\Gamma=X$ is the natural covering mapping. Assume that
\begin{equation}\label{eq:4-1}
    (m-2)\varepsilon(K_X) >2n.
\end{equation}
By definition, for every fixed $x\in X$, there exists a quasi-plurisubharmonic function $\phi^x\in C(X \backslash \{x\})$ such that
\begin{equation}\label{eq:weight}
   (m-2) \liminf_{z\to x} \frac{\phi^x(z)}{\log|z-x|^2} > 2n,
\end{equation}
and $i\partial\dbar \phi^x+i\Theta(K_X,h) \geq 0$ on $X$. For every $x,y\in \Omega$, put
\begin{equation*}
    \psi^x=P^*(\phi^{P(x)})+\log K_{\Omega}, \  \psi^{x,y}= \frac{P^*(\phi^{P(x)})+P^*(\phi^{P(y)})}2+\log K_{\Omega}
\end{equation*}
We know that $\psi^x$ and $\psi^{x,y}$ are plurisubharmonic functions on $\Omega$. We shall use them to construct holomorphic functions that we need.

In order to prove that $mK_X$ is $S''$ very ample, it suffices to prove the following two lemmas:

\begin{lemma}\label{le:jet} For every fixed $x\in\Omega$, there exist holomorphic functions $f_0, \cdots, f_n$ such that $||f_j||_{2,m-2} <\infty$ for every $0\leq j\leq n$, $P_m(f_0)(x)=1$ and
\begin{equation*}
    \frac{\partial(P_m(f_j)/P_m(f_0))}{\partial z_k}(x)=\delta_{jk}, \ \forall \ 1\leq j,k \leq n.
\end{equation*}
\end{lemma}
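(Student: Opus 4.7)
The plan is to prove the lemma by H\"ormander's $L^2$ solution of $\dbar$ on $\Omega$ using a plurisubharmonic weight with strong logarithmic poles at every point of the discrete orbit $\Gamma\cdot x$. For the given $x\in\Omega$, the hypothesis $(m-2)\varepsilon(K_X)>2n$ provides some $t$ with $(m-2)t>2n$ and a quasi-psh function $\phi:=\phi^{P(x)}\in C(X\setminus\{P(x)\})$ with $i\partial\dbar\phi+i\Theta(K_X,h)\ge0$ and $\liminf_{z\to P(x)}\phi(z)/\log|z-P(x)|^2=t$. Its pullback $P^*\phi$ has a logarithmic pole of strength $t$ at every $\gamma x$, and
\begin{equation*}
\psi:=(m-2)\bigl[P^*\phi+\log K_\Omega\bigr]
\end{equation*}
is plurisubharmonic on $\Omega$, with $e^{-\psi}=K_\Omega^{-(m-2)}e^{-(m-2)P^*\phi}$; near each $\gamma x$ one has $e^{-(m-2)P^*\phi}\sim|z-\gamma x|^{-2(m-2)t}$. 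Since $(m-2)t-n>n\ge 1$, any function $u$ with $\int_\Omega|u|^2 e^{-\psi}<\infty$ must vanish together with all its first partial derivatives at every $\gamma x$.

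Choose a coordinate neighbourhood $U\ni x$ in $\Omega$ small enough that $\gamma U\cap U=\emptyset$ for $\gamma\ne 1$ (possible because $\Gamma$ is discontinuous and fixed-point-free), and a cut-off $\chi\in C^\infty_c(U)$ with $\chi\equiv 1$ near $x$. Set
\begin{equation*}
\tilde f_0(z):=\chi(z),\qquad \tilde f_j(z):=\chi(z)(z_j-x_j),\quad 1\le j\le n.
\end{equation*}
Since $\dbar\tilde f_j$ is supported in $U\setminus\{\chi=1\}$, which is bounded away from every orbit point, $e^{-\psi}$ is bounded on that support, so $\int_\Omega|\dbar\tilde f_j|^2 e^{-\psi}<\infty$. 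H\"ormander's $L^2$ theorem applied on $\Omega$ with weight $\psi$ (adding a small strictly psh term such as $\epsilon|z|^2$ and passing to the limit if strict positivity is needed) produces $u_j$ with $\dbar u_j=\dbar\tilde f_j$ and $\int_\Omega|u_j|^2 e^{-\psi}<\infty$. Put $f_j:=\tilde f_j-u_j$; each $f_j$ is holomorphic on $\Omega$. Because $\phi$ is quasi-psh on the compact manifold $X$ it is bounded above by some constant $C$, whence
\begin{equation*}
\|u_j\|_{2,m-2}^2=\int_\Omega|u_j|^2 K_\Omega^{-(m-2)}\le e^{(m-2)C}\int_\Omega|u_j|^2 e^{-\psi}<\infty,
\end{equation*}
while $\|\tilde f_j\|_{2,m-2}<\infty$ is trivial from compact support, so $\|f_j\|_{2,m-2}<\infty$.

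To finish, evaluate the Poincar\'e series at $x$. Since $\gamma U\cap U=\emptyset$ for $\gamma\ne 1$, every $\gamma x$ with $\gamma\ne 1$ lies outside $\mathrm{supp}\,\tilde f_j$; combined with the first-jet vanishing of $u_j$ at every orbit point, this gives $f_j(\gamma x)=0$ and $(\partial f_j/\partial w_l)(\gamma x)=0$ for all $\gamma\ne 1$, all $l$ and all $j$, while $f_0(x)=1$, $f_j(x)=0$ for $j\ge 1$, and $(\partial f_j/\partial z_k)(x)=\delta_{jk}$ for $j,k\ge 1$. From $P_m(f)(z)=\sum_\gamma f(\gamma z)j_\gamma(z)^m$ one reads off $P_m(f_0)(x)=1$, $P_m(f_j)(x)=0$ for $j\ge 1$, and, since all $f_j\cdot\partial_{z_k}(j_\gamma^m)$-terms vanish thanks to $f_j(\gamma x)=0$, only $\gamma=1$ contributes to the derivative, so $(\partial P_m(f_j)/\partial z_k)(x)=(\partial f_j/\partial z_k)(x)=\delta_{jk}$. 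Because $P_m(f_0)(x)=1$ and $P_m(f_j)(x)=0$, the quotient rule immediately yields $\partial(P_m(f_j)/P_m(f_0))/\partial z_k(x)=\delta_{jk}$.

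The main obstacle is arranging the single weight $\psi$ to serve three purposes at once: plurisubharmonicity so that H\"ormander applies, singularity strong enough to force $1$-jet vanishing of the solution at \emph{every} point of the orbit $\Gamma\cdot x$ (not only at $x$), and the pointwise bound $e^{-\psi}\gtrsim K_\Omega^{-(m-2)}$ guaranteeing membership in the space defining $S''$. The margin $(m-2)t-n>n\ge 1$ supplied by $(m-2)\varepsilon(K_X)>2n$ is precisely what makes these three requirements mutually compatible; any weaker margin would either drop the derivative vanishing or the $\|\cdot\|_{2,m-2}$-integrability.
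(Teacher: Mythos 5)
Your proposal is correct and follows essentially the same route as the paper: the same cut-off functions $\tilde f_0=\chi$, $\tilde f_j=\chi\cdot(z_j-x_j)$, the same weight $(m-2)\bigl(P^*\phi^{P(x)}+\log K_\Omega\bigr)$ (the paper simply adds $|z|^2$ rather than $\epsilon|z|^2$ with a limit), H\"ormander's theorem on the pseudoconvex domain $\Omega$, and the $1$-jet vanishing of the solution at every orbit point forced by $(m-2)\varepsilon(K_X)>2n$. Your explicit verification of $\|f_j\|_{2,m-2}<\infty$ via the upper bound on the quasi-psh $\phi$, and the term-by-term evaluation of $P_m(f_j)$ at $x$, only spell out steps the paper leaves implicit.
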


\begin{lemma}\label{le:2point} For every fixed $x,y\in\Omega$ such that $\Gamma(x)\cap\Gamma(y)=\emptyset$, there exists a holomorphic function $f$ such that $||f||_{2,m-2} <\infty$ and
\begin{equation*}
   P_m(f)(x)=0, P_m(f)(y)=1.
\end{equation*}
\end{lemma}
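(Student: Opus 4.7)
The plan is to reduce the construction of $f$ to a $\dbar$-problem on $\Omega$, solved by H\"ormander's $L^2$ estimate with the plurisubharmonic weight $(m-2)\psi^{x,y}$ introduced above. First I would choose small open neighborhoods $U_x\ni x$ and $U_y\ni y$ whose $\Gamma$-translates are pairwise disjoint and mutually disjoint; this is possible because $\Gamma(x)\cap\Gamma(y)=\emptyset$ together with the discontinuity and fixed-point-freeness of $\Gamma$. Let $\chi\in C^\infty_c(U_y)$ be a cutoff with $\chi\equiv 1$ in a smaller neighborhood of $y$. Then $\chi$ vanishes at every $\gamma(x)$ and at every $\gamma(y)$ with $\gamma\neq 1$, so the Poincar\'e series trivially satisfies $P_m(\chi)(x)=0$ and $P_m(\chi)(y)=1$.

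Next I would solve $\dbar u=\dbar\chi$ on $\Omega$ by H\"ormander's weighted $L^2$ theorem (on an exhaustion by pseudoconvex subdomains if needed) with weight $(m-2)\psi^{x,y}$. The strict positivity required comes from
\begin{equation*}
   i\partial\dbar\bigl((m-2)\psi^{x,y}+2\log K_\Omega\bigr)\geq 2\omega,
\end{equation*}
a consequence of the quasi-plurisubharmonicity $i\partial\dbar\phi^{P(x)}+i\Theta(K_X,h)\geq 0$ and the analogous inequality for $\phi^{P(y)}$. The right-hand side of the H\"ormander estimate is finite because $\dbar\chi$ is supported in the compact annular region $U_y\cap\{0<\chi<1\}$, which avoids both $y$ and every $\gamma(x)$; hence $e^{-(m-2)\psi^{x,y}}$ is bounded on $\operatorname{supp}\dbar\chi$, and the estimate yields a solution $u$ with $\int_\Omega |u|^2 e^{-(m-2)\psi^{x,y}}<\infty$.

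The hypothesis $(m-2)\varepsilon(K_X)>2n$ now enters through \eqref{eq:weight}: near each orbit point $\gamma(x)$ the weight $(m-2)\psi^{x,y}$ has a logarithmic singularity whose coefficient of $\log|z-\gamma(x)|^2$ strictly exceeds $n$, and similarly at each $\gamma(y)$. Thus $e^{-(m-2)\psi^{x,y}}$ is non-integrable at these points, and the $L^2$ bound on $u$ forces $u(\gamma(x))=u(\gamma(y))=0$ for every $\gamma\in\Gamma$. Setting $f:=\chi-u$ produces a holomorphic function on $\Omega$ with $P_m(f)(x)=0$ and $P_m(f)(y)=1$; finiteness of $\|f\|_{2,m-2}$ follows since $\phi^{P(x)}$ and $\phi^{P(y)}$ are bounded above on the compact manifold $X$, giving $e^{-(m-2)\psi^{x,y}}\geq c\,K_\Omega^{-(m-2)}$ for a constant $c>0$. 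The main obstacle I foresee is handling the singularities of $\psi^{x,y}$ cleanly when invoking H\"ormander: concretely I would either regularize $\phi^{P(x)}$ and $\phi^{P(y)}$ by Demailly-type approximation and pass to the limit, or shrink the Seshadri exponent slightly to preserve a strict inequality throughout, extracting the vanishing of $u$ at the orbit points from the uniform $L^2$ bound by a standard weak-compactness argument.
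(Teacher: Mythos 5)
Your proposal is correct and follows essentially the same route as the paper: a cutoff equal to $1$ near $y$ whose support avoids $\Gamma(x)$ and the other translates of $y$, a H\"ormander $\dbar$-solution weighted by $(m-2)\psi^{x,y}$ (the paper adds $|z|^2$ rather than an extra multiple of $\log K_\Omega$ for strict positivity, an immaterial difference), and vanishing of $u$ on both orbits forced by the non-integrable logarithmic singularities coming from \eqref{eq:weight}. Your extra remarks on verifying $\|f\|_{2,m-2}<\infty$ and on regularizing the singular weight fill in details the paper leaves implicit, but do not change the argument.
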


\begin{proof}[Proof of Lemma \ref{le:jet}] Take a sufficiently small $r>0$ such that
\begin{equation*}
    \gamma(\{|z-x|<r\}) \cap \{|z-x|<r\} = \emptyset, \ \forall \ \gamma\in \Gamma\backslash \{1\}.
\end{equation*}
Choose a smooth function $\kappa$ such that $\kappa\in C^\infty (\mathbb R)$, $\kappa\equiv1$ on $(-\infty, 1/2)$ and   $\kappa\equiv0$ on $(1, \infty)$. Take
\begin{equation*}
\tilde f_0(z)=\kappa \left(\frac{|z-x|}{r}\right), \ \tilde f_j(z)=(z^j-x^j)\kappa \left(\frac{|z-x|}{r}\right), \ \forall \ 1\leq j\leq n.
\end{equation*}
It is easy to see that
\begin{equation*}
    \int_\Omega |\dbar \tilde f_j |^2_{i\partial\dbar |z|^2} e^{-(m-2)\psi^x-|z|^2} <\infty, \
    \forall \ 0\leq j\leq n.
\end{equation*}
Since $\Omega$ is Bergman complete, we know that $\Omega$ is pseudoconvex. By H\"ormander's theorem (in fact, by Demailly's theorem \cite{Demailly82}, it is enough to assume that $\Omega$ is complete K\"ahler), one may solve $\dbar u_j=\dbar \tilde f_j$ such that
\begin{equation}\label{eq:dbar-uj}
    \int_\Omega |u_j |^2  e^{-(m-2)\psi^x-|z|^2} <\infty, \
    \forall \ 0\leq j\leq n.
\end{equation}
By \eqref{eq:weight}, we have
\begin{equation*}
    u_j(\gamma(x))=0, \nabla u_j(\gamma(x))=0, \
    \forall \ 0\leq j\leq n, \ \gamma\in\Gamma.
\end{equation*}
where $\nabla$ denotes the gradient. Now it suffices to choose
\begin{equation*}
    f_j=\tilde f_j-u_j, , \ \forall \ 0\leq j\leq n.
\end{equation*}
The proof is complete.
\end{proof}

\begin{proof}[Proof of Lemma \ref{le:2point}] Take a sufficiently small $r>0$ such that
\begin{equation*}
    \gamma(\{|z-y|<r\}) \cap \{|z-y|<r\} = \emptyset, \ \forall \ \gamma\in \Gamma\backslash \{1\}.
\end{equation*}
and
\begin{equation*}
    \Gamma(x) \cap \{|z-y|<r\} = \emptyset.
\end{equation*}
Take
\begin{equation*}
\tilde f(z)=\kappa \left(\frac{|z-y|}{r}\right).
\end{equation*}
It is easy to see that
\begin{equation*}
    \int_\Omega |\dbar \tilde f |^2_{i\partial\dbar |z|^2} e^{-(m-2)\psi^{x,y}-|z|^2} <\infty, \
    \forall \ 0\leq j\leq n.
\end{equation*}
By H\"ormander's theorem, one may solve $\dbar u=\dbar \tilde f$ such that
\begin{equation}\label{eq:dbar-u}
    \int_\Omega |u|^2  e^{-(m-2)\psi^{x,y}-|z|^2} <\infty.
\end{equation}
By \eqref{eq:weight}, we have
\begin{equation*}
    u(\gamma(x))=u(\gamma(y))=0, \ \forall \ \gamma\in\Gamma.
\end{equation*}
It suffices to choose $f=\tilde f-u$.
\end{proof}

Now the proof of Theorem \ref{th:very-ample-Seshadri-Poincare} is complete.

Let's proof \eqref{eq:DF} now. We shall use the following version of Donnelly-Fefferman theorem (for the proof, see Theorem 2.3 in \cite{BernChar00}).

\begin{lemma}\label{le:DF} Let $\Omega$ be a pseudoconvex domain in $\C^n$. Let $\varphi$ and $\phi$ be two smooth plurisubharmonic functions such that
\begin{equation}\label{eq:dbounded}
    |\dbar \varphi|^2_{i\partial\dbar\varphi} < r<1,
\end{equation}
on $\Omega$. Then for every smooth $\dbar$-closed $(0,1)$-form $v$ with
\begin{equation*}
    I(v):=\int_\Omega |v|^2_{i\partial\dbar(\phi+\varphi)} e^{-\phi+\varphi}<\infty,
\end{equation*}
there exists smooth function $u$ such that $\dbar u=v$ and
\begin{equation*}
    \int_\Omega |u|^2 e^{-\phi+\varphi} \leq  C_r I(v).
\end{equation*}
\end{lemma}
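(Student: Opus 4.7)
The plan is to reduce Lemma~\ref{le:DF}, via the standard H\"ormander duality argument, to an a priori lower bound for $\dbarstar$ on $\dbar$-closed $(0,1)$-forms in the weighted space $L^2(\Omega, e^{\varphi-\phi})$, and then to derive that lower bound from the Bochner-Kodaira-Nakano-H\"ormander identity applied with the \emph{signed} weight $\Psi := \phi-\varphi$. Since $\Psi$ fails to be plurisubharmonic, the usual curvature term splits as $i\partial\dbar\phi - i\partial\dbar\varphi$, and the heart of the argument is to absorb the negative contribution $-i\partial\dbar\varphi$ using the hypothesis $|\dbar\varphi|^2_{i\partial\dbar\varphi}<r<1$.

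Concretely, by duality it suffices to show that for every smooth compactly supported $(0,1)$-form $\alpha$ with $\dbar\alpha=0$,
\[
|(v,\alpha)_{-\Psi}|^2 \leq C_r\,I(v)\,\|\dbarstar_\Psi\alpha\|^2_{-\Psi}.
\]
A pointwise Cauchy--Schwarz with the K\"ahler form $g:=i\partial\dbar(\phi+\varphi)$ reduces this to the a priori bound $\int|\alpha|^2_g\,e^{-\Psi}\leq C_r\,\|\dbarstar_\Psi\alpha\|^2_{-\Psi}$. The Bochner--Kodaira--Nakano--H\"ormander identity with weight $\Psi$, applied to $\alpha\in\ker\dbar$, gives
\[
\|\dbarstar_\Psi\alpha\|^2_{-\Psi}\ \geq\ \int\langle i\partial\dbar\phi\cdot\alpha,\alpha\rangle\,e^{-\Psi}\ -\ \int\langle i\partial\dbar\varphi\cdot\alpha,\alpha\rangle\,e^{-\Psi},
\]
in which the last summand is the obstruction.

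The main obstacle, and the step where the curvature hypothesis is used essentially, is to dominate this negative term by a controlled multiple of $\|\dbarstar_\Psi\alpha\|^2_{-\Psi}$. The plan is to integrate by parts (transferring one $\partial$ from $\varphi$ onto $\alpha$ and onto the weight factor, while using $\dbar\alpha=0$ to discard a cross term) to rewrite
\[
\int\langle i\partial\dbar\varphi\cdot\alpha,\alpha\rangle\,e^{-\Psi}\ =\ -2\,\mathrm{Re}\int\langle\dbar\varphi,\alpha\rangle\,\overline{\dbarstar_\Psi\alpha}\,e^{-\Psi},
\]
then to invoke the pointwise Cauchy--Schwarz with the metric $i\partial\dbar\varphi$ together with the hypothesis to get $|\langle\dbar\varphi,\alpha\rangle|^2\leq r\,\langle i\partial\dbar\varphi\cdot\alpha,\alpha\rangle$, and finally to apply a second Cauchy--Schwarz inside the integral. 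After rearrangement this yields $\int\langle i\partial\dbar\varphi\cdot\alpha,\alpha\rangle e^{-\Psi}\leq 4r(1-\sqrt{r})^{-2}\|\dbarstar_\Psi\alpha\|^2_{-\Psi}$. Substituting this back into the BKN inequality and using $g=i\partial\dbar\phi+i\partial\dbar\varphi$ delivers the required a priori estimate with $C_r$ of order $(1-\sqrt{r})^{-2}$, completing the proof.
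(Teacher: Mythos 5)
The paper itself does not prove Lemma~\ref{le:DF}: it simply cites Theorem 2.3 of Berndtsson--Charpentier \cite{BernChar00}. So your attempt has to be judged on its own. Your overall strategy --- duality, the Morrey--Kohn--H\"ormander identity with the signed weight $\Psi=\phi-\varphi$, and absorption of the negative curvature contribution via the hypothesis $|\dbar\varphi|^2_{i\partial\dbar\varphi}<r<1$ --- is exactly the right circle of ideas, and it is essentially how the cited proof goes. But the pivotal step is wrong as written.

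The claimed identity
$\int\langle i\partial\dbar\varphi\cdot\alpha,\alpha\rangle e^{-\Psi}=-2\,\mathrm{Re}\int\langle\dbar\varphi,\alpha\rangle\,\overline{\dbarstar_\Psi\alpha}\,e^{-\Psi}$
is false. Integrating by parts in $\sum_{j,k}\int\varphi_{j\bar k}\,\alpha_{\bar j}\overline{\alpha_{\bar k}}\,e^{-\Psi}$ produces, besides the term $\int\langle\alpha,\dbar\varphi\rangle\,\overline{\dbarstar_\Psi\alpha}\,e^{-\Psi}$, the extra term $-\sum_{j,k}\int\varphi_j\,\partial_{\bar j}\alpha_{\bar k}\,\overline{\alpha_{\bar k}}\,e^{-\Psi}$. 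The condition $\dbar\alpha=0$ only lets you replace $\partial_{\bar k}\alpha_{\bar j}$ by $\partial_{\bar j}\alpha_{\bar k}$; it does not discard this term. Already in $\C^1$, where every $(0,1)$-form is $\dbar$-closed, taking $\varphi=|z|^2$ and $\alpha=\bar z\chi\, d\bar z$ shows the leftover is nonzero. This leftover pairs $\bar\nabla\alpha$ against $\dbar\varphi$, and it can only be absorbed into the $\|\bar\nabla\alpha\|^2$ term of the full Morrey--Kohn--H\"ormander identity if all norms are taken with respect to a K\"ahler base metric dominating $i\partial\dbar\varphi$, so that $|\dbar\varphi|\le\sqrt r$ in that metric; with the Euclidean base metric implicit in the curvature terms you wrote, the hypothesis gives no control whatsoever on $|\dbar\varphi|$ and the absorption does not close. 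The repair is to run the entire Bochner--Kodaira argument (most cleanly for $(n,1)$-forms) with base metric $i\partial\dbar(\phi+\varphi)$ or $i\partial\dbar\varphi$: the cross term is then bounded by $\sqrt r\,\|\bar\nabla\alpha\|\bigl(\int\langle i\partial\dbar\varphi\,\alpha,\alpha\rangle e^{-\Psi}\bigr)^{1/2}$, the resulting quadratic inequality closes, and one recovers $C_r=4/(1-\sqrt r)^2$. A consistency check that something is off: if your identity held, it would give $\int\langle i\partial\dbar\varphi\,\alpha,\alpha\rangle e^{-\Psi}\le 4r\,\|\dbarstar_\Psi\alpha\|^2$ and hence a constant that stays bounded as $r\to1$, whereas the true constant must blow up there --- the factor $(1-\sqrt r)^{-2}$ comes precisely from absorbing the term you dropped.
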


Fix $\varepsilon>0$ and $x\in\Omega$. Put
\begin{equation*}
    \varphi= (\varepsilon+C(\Omega))^{-1}\log K_{\Omega},
\end{equation*}
and
\begin{equation*}
    \phi= \left(m-2+(\varepsilon+C(\Omega))^{-1}\right)\psi^x+|z|^2.
\end{equation*}
Let $\varepsilon \to 0$, by the same argument as in the proof of Theorem \ref{th:very-ample-Seshadri-Poincare}, we know that \eqref{eq:DF} is true.

\section{Lower bound estimate for $\varepsilon(K_X)$}

We shall use Berndtsson-Cerd$\grave{a}$'s cut-off function (see the proof of Theorem 1 in \cite{BC95}) to prove Proposition \ref{pr:inj-Seshadri} and Proposition \ref{pr:density-Seshadri}.

Put $\phi(z)=\log(\rho(z,x)^2/r^2)$, $r>0$. Assume that $i\partial\dbar\phi \geq 0$. Since $\rho$ is the distance function associated to $\omega$, we have $2|\dbar\rho|^2_{\omega}=|d\rho|^2_{\omega}\leq 1$. Thus
\begin{equation}\label{eq:4-1}
    e^{\phi} (i\partial\phi\wedge \dbar\phi) =\frac{4i \partial\rho\wedge \dbar \rho}{r^2}   \leq \frac{2\omega}{r^2}.
\end{equation}
Let $a(t)$ be a $C^{1,1}$ function on $\mathbb R$ such that
\begin{equation}\label{eq:a1}
    a=0 \ \text{on} \ (0,\infty), \ \ \lim_{t\to -\infty}\frac{a(t)}{t}=1.
\end{equation}
Then we have
\begin{equation*}
    i\partial\dbar a(\phi)=a'(\phi)i\partial\dbar \phi+\frac{a''(\phi)}{e^{\phi}}e^{\phi} (i\partial\phi\wedge \dbar\phi).
\end{equation*}
in the sense of distribution, where $a''\in L_{\rm loc}^{\infty}$. Thus if
\begin{equation}\label{eq:a2}
    a'\geq 0 , \  \frac{a''(t)}{e^t}\geq -c \ \text{on} \ \mathbb R, \ c>0,
\end{equation}
then $i\partial\dbar a(\phi)\geq -2c\omega/r^2$.

Assume that $a''(t)=-ce^t$ on $(-\infty,0)$, then
\begin{equation*}
    a(t)=b+t-ce^t,
\end{equation*}
Since $a\in C^{1,1}(\mathbb R)$, we have $a(0)=a'(0)=0$, thus
\begin{equation}\label{eq:a3}
    b=c=1, \ a(t)=1+t-e^t \ \text{on} \ (-\infty,0),
\end{equation}
and
\begin{equation}\label{eq:a4}
    i\partial\dbar a(\phi)\geq -\frac{2\omega}{r^2}
\end{equation}
on $\mathbb R$. Furthermore, by \eqref{eq:a1}, we have
\begin{equation}\label{eq:a5}
    \lim_{z\to x} \frac{a(\phi(z))}{\log|z-x|^2} =1.
\end{equation}
Put
\begin{equation}\label{eq:a6}
    \psi^x(z)=\sum_{\gamma\in\Gamma} a\left(\log\frac{\rho(\gamma(z),x)^2}{r^2}\right)
    = \sum_{\gamma\in\Gamma} a\left(\log\frac{\rho(z,\gamma(x))^2}{r^2}\right).
\end{equation}
By \eqref{eq:a5}, we have
\begin{equation}\label{eq:psix}
    i\partial\dbar \psi^x(z) \geq -2\frac{\sharp\Gamma(x)\cap\{\rho(\cdot,z)<r\}}{r^2} \omega(z).
\end{equation}
By definition of $D(r,x)$, we have
\begin{equation}\label{eq:psix1}
    i\partial\dbar \psi^x \geq -2D(r,x) \omega.
\end{equation}
Notice that $\psi^x$ can be seen as a quasi-plurisubharmonic function on $X$ with isolated singularity at $P(x)$. Thus by \eqref{eq:a5}, \eqref{eq:psix1} and definition of $\varepsilon(K_X, P(x))$, we have
\begin{equation*}
    \varepsilon(K_X, P(x))\geq \frac{1}{2D(r,x)}.
\end{equation*}
The proof of Proposition \ref{pr:density-Seshadri} is complete. Notice that
\begin{equation*}
    D(\rho_x,x)=\rho_x^{-2}.
\end{equation*}
Thus Proposition \ref{pr:inj-Seshadri} follows from Proposition \ref{pr:density-Seshadri}.
\\

\textbf{Acknowledgements} The author would thank Professor Min Ru and Dr. Xu Wang for their helpful comments and suggestions.


\begin{thebibliography}{99}

\bibitem{Ahlfors64} L. Ahlfors, {\it Finitely generated Kleinian groups}, Amer. J. Math. {\bf 86} (1964), 413--429.

\bibitem{Atiyah69} M. Atiyah, {\it The signature of fibre bundles}, In: Global Analysis, (in honor
of Kodaira), Princeton Univ. Press, (1969), 73--84.

\bibitem{Bell66} D. Bell, {\it Poincar\'e series representation of automorphic forms}, Ph. D. diss., Brown Univ., (1966).

\bibitem{BC95} B. Berndtsson and J. O. Cerd$\grave{a}$, {\it On Interpolation and sampling in Hilbert spaces of analytic functions}, J. reine angew. Math. {\bf 464} (1995), 109--128.

\bibitem{BernChar00} B. Berndtsson and P. Charpentier, {\it A Sobolev mapping property of the Bergman kernel}, Math. Z. {\bf 235} (2000), 1--10.

\bibitem{Bers65} L. Bers, {\it Automorphic Forms and Poincar\'e series for infinitely generated Fuchsian groups}, Amer. J. Math. {\bf 87} (1965), 196--214.

\bibitem{Bers75} L. Bers, {\it Automorphic Forms for Schottky Groups}, Adv. in Math. {\bf 16} (1975), 332--361.

\bibitem{Demailly82} J. P. Demailly, {\it Estimations $L^2$ pour l'op\'{e}rateur $\bar{\partial}$ d'un fibr\'{e} vectoriel holomorphe semi-positif au-dessus d'une vari\'{e}t\'{e} k\"{a}hl\'{e}rienne compl\'ete}, Ann. Sci. \'Ec. Norm. Sup\'er. {\bf 15} (1982), 457--511.

\bibitem{Demailly92} J. P. Demailly, {\it Singular Hermitian Metrics on Positive Line Bundles}, Lect. Notes Math. {\bf 1507} (1992), 84--104.

\bibitem{Demailly96} J. P. Demailly, {\it $L^2$-vanishing theorem for positive line bundles and adjunction theory}, In: Transcendental Methods in Algebraic Geometry, F. Catanese and C. Ciliberto, eds., Lect. Notes Math., {\bf 1646} (1996).

\bibitem{DF83} H. Donnelly and C. Fefferman, {\it $L^2$-cohomology and index theorem for the Bergman metric}, Ann. of Math. {\bf 118} (1983), 593--618.

\bibitem{Earle69} C. J. Earle, {\it Some remarks on Poincar\'e series}, Comp. Math. {\bf 21} (1969), 167--176.

\bibitem{GW79} R. E. Greene and H. Wu, {\it Function Theory on Manifolds Which Possess a Pole}, Lect. Notes in Math. {\bf 699}, Springer-Verlag, (1979).

\bibitem{Griffiths71} P. A. Griffiths, {\it Complex-Analytic Properties of Certain Zariski Open Sets on Algebraic Varieties}, Ann. of Math. {\bf 94} (1971), 21--51.

\bibitem{Hormander65} L. H\"{o}rmander, {\it $L^2$-estimates and existence theorems for the $\bar{\partial}$-equation}, Acta Math. {\bf 113} (1965), 89--152.

\bibitem{HT99} J. M. Hwang and W. K. To, {\it On Seshadri Constants of Canonical Bundles of Compact Complex Hyperbolic Spaces}, Compositio Math. {\bf 118} (1999), 203--215.

\bibitem{HT00} J. M. Hwang and W. K. To, {\it On Seshadri constants of canonical Bundles of compact quotients of bounded symmetric domains}, J. reine angew. Math. {\bf 523} (2000), 173--197.

\bibitem{Ishi} H. Ishi, \emph{On the Bergman metric of Siegel domains}, preprint.

\bibitem{K-O07}  C. Kai and T. Ohsawa, {\it A note on the Bergman metric of the bounded homogeneous domains}, Nagoya Math. J. {\bf 186} (2007), 157--163.

\bibitem{Kodaira67} K. Kodaira, {\it A certain type of irregular algebraic surfaces}, Journ. Anal. Math. {\bf 19} (1967), 207--215.

\bibitem{Kollar95} J. Koll\'{a}r, {\it Shafarevich maps and automorphic forms}, Princeton University Press, (1995).

\bibitem{KW65} A. Kor\'{a}nyi and J. A. Wolf, {\it Realization of hermitian symmetric spaces as generalized halfplanes}, Ann. of Math. {\bf 81} (1965), 265--288.

\bibitem{Lan67} H. J. Landau, {\it Necessary density conditions for sampling and interpolation of
certain entire functions}, Acta Math. {\bf 117} (1967), 37--52.

\bibitem{MT00} X. Massaneda and P. J. Thomas, {\it Interpolating sequences for Bargemann-Fock spaces in ${\mathbb C}^n$}, Indag. Math. {\bf 11} (2000), 115-127.

\bibitem{Mourougane03} C. Mourougane, {\it Interpolation in non-positively curved K\"ahler manifolds}, Journal of Geometric Analysis, {\bf 13} (2003), 173--183.

\bibitem{Resnikoff69} H. Resnikoff, {\it Supplement to "Some remarks on Poincar\'e series"}, Comp. Math. {\bf 21} (1969), 177--181.

\bibitem{Seip92} K. Seip, {\it Density theorems for sampling and interpolation in the Bargmann-Fock space I}, J. reine angew. Math. {\bf 429} (1992), 91--106.

\bibitem{Seip93} K. Seip, {\it Beurling type density theorems in the unit disk}, Invent. Math. {\bf 113} (1993), 21--39.

\bibitem{SeipWallsten92} K. Seip, and R. Wallsten, {\it Density theorems for sampling and interpolation in the Bargmann-Fock space II}, J. reine angew. Math. {\bf 429}, (1992), 107--113.

\bibitem{Shabat83} G. Shabat, {\it Biholomorphic automorphisms of the universal covers of complex algebraic surfaces}, Ph.D. Thesis, Univ. of Moscow, (1983).

\bibitem{Siegel49} C. L. Siegel, {\it Analytic functions of several complex variables}, Mimeographed notes, Institute for Advanced Study, Princeton, (1949).

\bibitem{V-G-P63} \`{E}. B. Vinberg, S. G. Gindikin and I. I. Pjateck\u{i}-\v{S}apiro, {\it Classification and canonical realization of complex homogeneous bounded domains}, Trudy Moskov. Mat. Obs. {\bf 12} (1963), 359--388; Trans. Moscow Math. Soc. {\bf 12} (1963), 404--437.

\bibitem{Xu} X. Wang, {\it Effective very ampleness of the canonical line bundles on ball quotients}, J. Geom. Anal., to apear.

\bibitem{Yeung00} S. K. Yeung, {\it Effective estimates on the very ampleness of the canonical line bundle of locally Hermitian symmetric spaces}, Trans. Amer. Math. Soc. {\bf 353} (2000), 1387--1401.

\end{thebibliography}
\end{document}